\numberwithin{equation}{section}
\theoremstyle{plain}
\newtheorem{theorem}[subsubsection]{Theorem}
 \newtheorem{prop}[subsubsection]{Proposition}
 \newtheorem{cor}[subsubsection]{Corollary}
 \theoremstyle{definition}
\newcommand{\CC}{\mathbb{C}}
\newcommand{\calF}{\mathcal{F}}
\newcommand{\calG}{\mathcal{G}}
\newcommand{\calS}{\mathcal{S}}
\newcommand{\frg}{\mathfrak{g}}
\newcommand{\frb}{\mathfrak{b}}
\newcommand{\frn}{\mathfrak{n}}
\newcommand{\Ind}{\textup{Ind}}
\newcommand{\ind}{\textup{ind}}
\newcommand\Mod{\textup{Mod}}
\newcommand{\Tr}{\textup{Tr}}
\newcommand\Hom{\textup{Hom}}
\newcommand{\Ad}{\textup{Ad}}
\newcommand{\quash}[1]{}
\newcommand{\Br}{\mathfrak{Br}}
\newcommand{\Wr}{\overline{W}}
\newcommand{\Hilb}{\textup{Hilb}}
\newcommand{\bHilb}{\mathbf{\textup{Hilb}}}
\newcommand{\eMFn}{\mathrm{MF}^{sc}_{B_n^2}}
\newcommand{\MF}{\mathrm{MF}^{sc}}
\newcommand{\calXr}{\overline{\mathcal{X}}}
\newcommand{\calX}{\mathcal{X}}
\newcommand{\MFs}{\mathrm{MF}}
\newcommand{\calZ}{\mathcal{Z}}
\newcommand{\calC}{\mathcal{C}}
\newcommand{\calY}{\mathcal{Y}}
\newcommand{\frh}{\mathfrak{h}}
\newcommand{\Lks}{\mathfrak{L}}
\newcommand{\bcls}{\mathfrak{cl}}
\newcommand{\CE}{\mathrm{CE}}
\newcommand{\odel}{\stackon{$\otimes$}{$\scriptstyle\Delta$}}
\newcommand{\forg}{\mathrm{fgt}}
\thanks{The work of A.O. was supported in part by  the NSF CAREER grant DMS-1352398}
\thanks{The work of L.R. was supported in part by the Sloan Foundation and the NSF grant DMS-1108727}
\title{Affine Braid group, JM elements and knot homology}
\author{A. Oblomkov}
\address{
A.~Oblomkov\\
Department of Mathematics and Statistics\\
University of Massachusetts at Amherst\\
Lederle Graduate Research Tower\\
710 N. Pleasant Street\\
Amherst, MA 01003 USA
}
\email{oblomkov@math.umass.edu}
\author{L. Rozansky}
\address{
L.~Rozansky\\
Department of Mathematics\\
University of North Carolina at Chapel Hill\\
CB \# 3250, Phillips Hall\\
Chapel Hill, NC 27599 USA
}
\email{rozansky@math.unc.edu}
\begin{document}
\maketitle
\begin{abstract}
  In this paper we construct a homomorphism  of the affine braid group \(\mathfrak{Br}_n^{aff}\) in the convolution algebra  of the
  equivariant matrix factorizations on the space \(\overline{\mathcal{X}}_2=\mathfrak{b}_n\times GL_n\times\mathfrak{n}_n\)
  considered in the earlier paper of the authors.
  We explain that the pull-back on the stable part of the  space \(\overline{\mathcal{X}_2}\)  intertwines with the natural
    homomorphism from the affine braid group \(\mathfrak{Br}_n^{aff}\) to the finite braid group \(\mathfrak{Br}_n\).
    This observation allows us  derive a relation between the knot homology of the closure of \(\beta\in\mathfrak{Br}_n\) and
    the knot homology of the closure of \(\beta\cdot\delta\) where \(\delta\) is a product of the JM elements in \(\mathfrak{Br}_n\)
\end{abstract}
\tableofcontents

\section{Introduction}
\label{sec:introduction}
This paper is an extension of our earlier paper where we constructed a triply-graded knot homology theory \cite{OR16}.
In \cite{OR16} the homology $\textup{H}(L(\beta))$ of
the link $L(\beta)$ that is a closure of the braid $\beta\in \Br_n$ is realized, roughly, as a space of derived global sections
of the complex of equivariant quasi-coherent sheaves $\calS_\beta$ on the Hilbert scheme of $n$ points on the plane $\Hilb_n$.
The knot homology of this sort was expected to exist for quite some time \cite{AS12,ORS12,GORS12,GN15,GNR16,OR16}, in particular it was expected that
in such theory we would have a natural relation between $\textup{H}(L(\beta))$ and $\textup{H}(L(\beta\circ Tw))$
where $Tw$ is the full twist braid. This paper shows that this expectation is indeed true.

Before we proceed to the main statement of the paper, let us recall the main result of \cite{OR16}\footnote{Here and everywhere below
we state a $GL$ version of the results of \cite{OR16}; the paper \cite{OR16} covers the $SL$ version of the results, but the proofs of the $GL$ version are essentially identical.}. In this paper we use notations $V_n=\CC^n$, $\frg_n=\textup{End}(V)$,
$\frb_n,\frn_n$ are the upper, respectively strictly upper, triangular matrices, we also omit the subindex $n$ when the
rank is obvious from the context.

The free nested Hilbert scheme $\bHilb_{1,n}^{free}$ is a $B$-quotient of the sublocus
$\widetilde{\bHilb}^{free}_{1,n}\subset \frb_n\times\frn_n\times V_n$ of the cyclic triples $\{(X,Y,v)|\CC\langle X,Y\rangle v=V_n\}$.
The usual nested Hilbert scheme $\bHilb^L_{1,n}$ is the dg subscheme  of $\bHilb^{free}_{1,n}$, it is defined by imposing the equation  $[X,Y]=0$.

The torus $T_{sc}=\CC^*\times\CC^*$ acts on $\bHilb_{1,n}^{free}$ by scaling the matrices. We denote by $D^{per}_{T_{sc}}(\bHilb_{1,n}^{free})$ the derived category of  two-periodic complexes of  $T_{sc}$-equivariant quasi-coherent sheaves on $\bHilb_{1,n}^{free}$. Let us also denote by $
\mathcal{B}^\vee$ the descent of the
trivial vector bundle $V_n$ on $\widetilde{\bHilb}^{free}_{1,n}$ to the quotient $\bHilb^{free}_{1,n}$. Respectively, \(\mathcal{B}\) stands for the dual of
\(\mathcal{B}^\vee\). In \cite{OR16} we construct for every $\beta\in \Br_n$ an
element $$\mathbb{S}_\beta\in D^{per}_{T_{sc}}(\bHilb_{1,n}^{free})$$ such that space of hyper-cohomology of the complex:
$$\mathbb{H}^k(\mathbb{S}_\beta):=\mathbb{H}(\mathbb{S}_\beta\otimes \Lambda^k\mathcal{B}) $$
defines an isotopy invariant.

\begin{theorem}\cite{OR16}\label{thm:mainOR} For any $\beta\in\Br_n$ the doubly graded space
$$ \mathcal{H}^k(\beta):=\mathbb{H}^{(k+\textup{writh}(\beta)-n-1)/2}(\mathbb{S}_\beta)$$
is an isotopy invariant of the braid closure $L(\beta)$.
\end{theorem}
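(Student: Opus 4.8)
The plan is to invoke Markov's theorem, which asserts that two braids, possibly on different numbers of strands, have isotopic closures if and only if they are related by a finite sequence of two moves: conjugation $\beta\mapsto\alpha\beta\alpha^{-1}$ inside $\Br_n$ (Markov I), and positive or negative stabilization $\Br_n\ni\beta\mapsto\beta\sigma_n^{\pm1}\in\Br_{n+1}$ (Markov II). Since $\mathbb{S}_\beta$ is constructed so as to satisfy the braid relations, the assignment $\beta\mapsto\mathbb{S}_\beta$ already factors through $\Br_n$, and it suffices to prove that the doubly graded space $\mathcal{H}^k(\beta)$ is unchanged under each of these two moves. The role of the normalization $(k+\textup{writh}(\beta)-n-1)/2$ is precisely to absorb the elementary bookkeeping: the writhe is a conjugation invariant but changes by $\pm1$ under stabilization, while $n$ increases by one, and I will arrange at the end that these shifts cancel against the homological and weight shifts introduced by the geometry so that $\mathcal{H}^k$ always lands in the intended degree.

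For Markov I the key point is that the functor $\mathbb{S}\mapsto\mathbb{H}(\mathbb{S}\otimes\Lambda^k\mathcal{B})$ behaves as a categorical trace. The plan is to exhibit the closure operation producing $\mathbb{S}_\beta$ as cyclic, i.e. to construct a canonical isomorphism $\mathbb{S}_{\alpha\beta}\cong\mathbb{S}_{\beta\alpha}$ in $D^{per}_{T_{sc}}(\bHilb_{1,n}^{free})$: both objects arise by convolving the same pair of kernels on $\calXr_2$ in the two cyclic orders and then closing up, and the twist by $\Lambda^k\mathcal{B}$ is $\Br_n$-equivariant and is carried along the identification. Applying this with $\alpha$ replaced by $\alpha^{-1}$ yields $\mathbb{S}_{\alpha\beta\alpha^{-1}}\cong\mathbb{S}_\beta$, hence $\mathcal{H}^k(\alpha\beta\alpha^{-1})\cong\mathcal{H}^k(\beta)$; because $\textup{writh}$ and $n$ are unchanged, the degrees match on the nose.

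The hard part is Markov II. Here I must compare the geometry at rank $n$ with that at rank $n+1$. The plan is to produce a correspondence between $\bHilb_{1,n}^{free}$ and $\bHilb_{1,n+1}^{free}$, coming from adjoining a single strand together with its crossing $\sigma_n^{\pm1}$, and to compute the pushforward of $\mathbb{S}_{\beta\sigma_n^{\pm1}}$ along it. The expected outcome is that this pushforward reproduces $\mathbb{S}_\beta$ up to a shift of the $T_{sc}$-weight and a one-step shift of the exterior-power index, the latter reflecting that the rank of $\mathcal{B}$ grows from $n$ to $n+1$. Verifying this requires identifying the extra generator $\sigma_n^{\pm1}$ with an explicit Koszul-type matrix factorization, convolving it with the rank-$n$ kernel, pushing forward, and tracking the Euler-characteristic contribution of the new strand. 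I expect the genuine difficulty to be exactly this grading match: one must check that the $\pm1$ shift of $\textup{writh}(\beta)$ and the $+1$ shift of $n$ appearing in the exponent $(k+\textup{writh}(\beta)-n-1)/2$ precisely compensate the homological and weight shifts produced by the correspondence, separately for positive and negative stabilization. Once both stabilizations are confirmed alongside Markov~I, Markov's theorem delivers the asserted isotopy invariance of $\mathcal{H}^k(\beta)$.
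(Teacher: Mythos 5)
Your plan is essentially the paper's own route: Theorem~\ref{thm:mainOR} is quoted from \cite{OR16}, where it is proved exactly by checking the two Markov moves --- conjugation invariance coming from the cyclicity (trace property) of the closure-of-convolution functor $\mathbb{L}\circ\Phi^{fr}$, which the present paper restates as the fact that $\mathbb{H}\circ\mathcal{T}r$ categorifies the Jones--Ocneanu trace, and stabilization handled by a rank-changing comparison in which the normalization $(k+\textup{writh}(\beta)-n-1)/2$ absorbs the homological and weight shifts, just as you describe. Be aware that the present paper gives no independent proof of this statement (it only cites \cite{OR16} and records the trace reformulation), so your outline --- correct in structure, with the Markov~II computation flagged but not carried out --- matches the strategy of the cited proof rather than anything you could check line-by-line here.
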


It is natural to expect that the construction of \cite{OR16} produces
the same triply-graded knot homology as in the original papers
\cite{KhR08a,KhR08b}.
In the subsection~\ref{sec:geom-trace-oper} we remind the construction of $\mathbb{S}_\beta$.
Determining   the graded dimensions of $\mathbb{H}^k(\beta)$ for a given braid is a hard computational problem.
However, for a special class of braids, including torus braids, the computation is relatively easy, and we provide the details.
%

\subsection{Jucys-Murphy elements}
\label{sec:jucy-murphy-elements}
The braid group $\Br_n$ is generated by the elements $\sigma_i$, $i=1,\dots,n-1$ modulo the standard relations. The mutually commuting elements $\delta_i\in\Br_n$:
$$ \delta_i:=\sigma_{i}\sigma_{i+1}\dots\sigma_{n-1}^2\dots\sigma_{i+1}\sigma_{i},\quad i=1,\dots,n-1$$
 are called {\it Jucys-Murphy (JM) elements}.

The group of characters of the Borel subgroup $B_n$ is generated by the characters $\chi_i$: $\chi(X)=X_{ii}$ and we denote by $\CC_{\chi_i}$ the corresponding one-dimensional
representation. The trivial line bundle $\CC_{\chi_i}$ on $\widetilde{\bHilb}_{1,n}^{free}$ descends to the line bundle $\mathcal{L}_i$ on the quotient
$\bHilb_{1,n}^{free}$.  The main result of this note is the following

\begin{theorem} For any $\beta\in \Br_n$ we have
$$ \mathbb{H}^k(\mathbb{S}_{\beta\cdot \delta})=\mathbb{H}^k(\mathbb{S}_\beta\otimes \mathcal{L}_\delta),$$
where $\delta=\prod_{i=2}^n \delta_i^{r_i}$ and $\mathcal{L}_\delta=\otimes_{i=2}^n \mathcal{L}_i^{\otimes r_i}$.
\end{theorem}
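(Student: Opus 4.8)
The plan is to promote the stated identity of hyper-cohomology groups to an isomorphism
$\mathbb{S}_{\beta\cdot\delta}\cong\mathbb{S}_\beta\otimes\mathcal{L}_\delta$ inside $D^{per}_{T_{sc}}(\bHilb_{1,n}^{free})$, after which tensoring with $\Lambda^k\mathcal{B}$ and applying $\mathbb{H}$ gives the theorem. The engine of the argument is the construction advertised in the abstract: the homomorphism from $\Br_n^{aff}$ into the convolution algebra of $T_{sc}$-equivariant matrix factorizations on $\overline{\mathcal{X}}_2$, together with the fact that pull-back to the stable locus intertwines this action with the finite braid action via the natural map $p:\Br_n^{aff}\to\Br_n$. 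I would begin by recalling, from the geometric trace construction reviewed in Section~\ref{sec:geom-trace-oper}, that $\mathbb{S}_\beta$ is produced from $\beta$ as a geometric trace $\mathrm{Tr}$ landing in $D^{per}_{T_{sc}}(\bHilb_{1,n}^{free})$.

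The decisive step is to show that the Jucys–Murphy element $\delta$ acts, on the stable side, simply by tensoring with $\mathcal{L}_\delta$. For this I would use that each $\delta_i$ is the image under $p$ of a generator $\theta_i$ of the lattice subgroup of $\Br_n^{aff}$, and that lattice elements act on matrix factorizations over $\overline{\mathcal{X}}_2$ by tensoring with the character line bundles attached to the $\chi_i$. Feeding the identity $p(\theta_i)=\delta_i$ into the intertwining property then converts the affine line-bundle twist into the finite action of $\delta_i$: the pull-back to the stable part turns $\otimes$ (character bundle of $\theta_i$) into the action of $\delta_i$, so the latter is exactly $\otimes\,\mathcal{L}_i$. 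A good consistency check, which I would verify first, is that the full twist $\prod_i\delta_i$, being the image of the sum of all coweights, acts by the determinant twist, matching the known correspondence between the full twist and $\mathcal{O}(1)$.

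Granting that $\delta=\prod_{i}\delta_i^{r_i}$ acts by $\otimes\,\mathcal{L}_\delta$, I would finish by appealing to the projection formula for the geometric trace: since $\mathcal{L}_\delta$ is pulled back from the base of the trace correspondence, it passes outside $\mathrm{Tr}$, giving $\mathbb{S}_{\beta\cdot\delta}=\mathrm{Tr}(\beta\cdot\delta)\cong\mathrm{Tr}(\beta)\otimes\mathcal{L}_\delta=\mathbb{S}_\beta\otimes\mathcal{L}_\delta$. Tensoring with $\Lambda^k\mathcal{B}$ and passing to hyper-cohomology then yields $\mathbb{H}^k(\mathbb{S}_{\beta\cdot\delta})=\mathbb{H}^k(\mathbb{S}_\beta\otimes\mathcal{L}_\delta)$.

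I expect the main obstacle to be the middle step. One must pin down the lattice generator $\theta_i$ with $p(\theta_i)=\delta_i$ and verify that its image in the convolution category is precisely the twist by the $\chi_i$-bundle, with no residual correspondence supported away from the stable locus that would survive the pull-back. Equally one must confirm that this line bundle is genuinely pulled back from the base of the trace, i.e.\ that it is constant along the $\GL_n$- and $\frn_n$-directions of $\overline{\mathcal{X}}_2$ that are integrated out, so that the projection formula truly applies. Finally, reconciling the indexing $i=2,\dots,n$ of the theorem with the $n$ characters $\chi_i$ is a bookkeeping point I would settle while matching the lattice generators of $\Br_n^{aff}$ to the $\chi_i$.
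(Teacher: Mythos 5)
Your proposal follows essentially the same route as the paper: it uses the affine braid group homomorphism into the convolution algebra, the fact that the Bernstein--Lusztig lattice elements $\Delta_i$ (your $\theta_i$) act by the character twists $\langle\chi_i,0\rangle$, the intertwining $\mathrm{fgt}^*\circ\Phi^{aff}=\Phi^{fr}\circ\mathfrak{fgt}$ with $\mathfrak{fgt}(\Delta_i)=\delta_i$, and the passage of the resulting twist through the trace functor $\mathbb{L}$ to become $\otimes\,\mathcal{L}_\delta$ on $\bHilb_{1,n}^{free}$ --- which is exactly the paper's chain of identities $\mathbb{L}\circ\Phi^{fr}(\beta\cdot\delta)=\mathbb{L}\circ\mathrm{fgt}^*\circ\Phi^{aff}(\beta\cdot\Delta)=\mathbb{L}(\mathrm{fgt}^*\circ\Phi^{aff}(\beta))\otimes\mathcal{L}_\delta$. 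The steps you flag as obstacles (identifying the lattice generators, the character-to-line-bundle descent, and the index bookkeeping) are precisely the supporting theorems the paper establishes, so the argument is correct and matches the paper's own proof.
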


The  scheme \(\bHilb^L_{1,n}\) is expected to have many features of the usual Hilbert scheme of points on the plane. However, since the derive structure is non-trivial, the computations on the dg scheme \(\bHilb^{L}_{1,n}\) are very challenging. In contract, the space \(\bHilb_{1,n}^{free}\) is smooth manifold
and is
an iterated tower of projective spaces.
In particular, we have  the
following 

\begin{prop}
  The line bundle \(\mathcal{L}_1\otimes\dots\otimes \mathcal{L}_{n-1}\) is ample on
  \(\bHilb_{1,n}^{free}\).
\end{prop}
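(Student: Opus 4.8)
The plan is to leverage the stated iterated projective-bundle structure of \(\bHilb_{1,n}^{free}\) and reduce ampleness to a positivity check on the finitely many fibre-line classes of the tower. Since \(\bHilb_{1,n}^{free}\) carries the affine morphism \(p\colon\bHilb_{1,n}^{free}\to\bbA^n\) recording the diagonal entries (eigenvalues) \(X_{11},\dots,X_{nn}\), ampleness is to be understood relative to \(p\), whose fibres are the compact iterated projective bundles supplied by the tower; it therefore suffices to prove that \(\mathcal{L}_1\otimes\cdots\otimes\mathcal{L}_{n-1}\) restricts to an ample bundle on each such fibre. First I would set up the tautological flag \(0=\mathcal{V}_0\subset\mathcal{V}_1\subset\cdots\subset\mathcal{V}_n=\mathcal{B}^\vee\) of subbundles, where \(\mathcal{V}_i\) is the descent of \(\langle e_1,\dots,e_i\rangle\subset V_n\), and record the identification \(\mathcal{L}_i\cong\mathcal{V}_i/\mathcal{V}_{i-1}\) (up to a dualization convention) coming directly from the definition of \(\mathcal{L}_i\) as the line bundle attached to the \(i\)-th diagonal character \(\chi_i\). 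Consequently \(\mathcal{L}_1\otimes\cdots\otimes\mathcal{L}_{n-1}\cong\det\mathcal{V}_{n-1}\), so the whole statement becomes a positivity statement about the determinant of the corank-one tautological subbundle.

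Next I would make the tower explicit, writing it as \(\bHilb_{1,n}^{free}=Z_{n-1}\xrightarrow{\pi_{n-1}}Z_{n-2}\to\cdots\to Z_1\xrightarrow{\pi_1}Z_0\), where each \(\pi_k\) is the projectivisation adding the \(k\)-th step of the flag, and let \(\ell_k\) denote the class of a line in a fibre of \(\pi_k\), viewed as a curve in the total space. By the tower structure the closed cone of curves of a fibre of \(p\) is generated by \(\ell_1,\dots,\ell_{n-1}\), so by Kleiman's criterion it is enough to verify \(\deg\!\big((\mathcal{L}_1\otimes\cdots\otimes\mathcal{L}_{n-1})|_{\ell_k}\big)>0\) for every \(k\). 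The degrees are computed by restricting the tautological flag to \(\ell_k\): along \(\ell_k\) only the tautological datum at stage \(k\) varies, so \(\mathcal{L}_k=\mathcal{V}_k/\mathcal{V}_{k-1}\) restricts to the relative hyperplane (or tautological sub-)bundle \(\mathcal{O}(\pm1)\) and contributes \(1\), while the bundles \(\mathcal{L}_i\) with \(i<k\) are pulled back from \(Z_{k-1}\) and hence restrict trivially.

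The main obstacle is controlling the higher tautological line bundles \(\mathcal{L}_i\) with \(i>k\) along \(\ell_k\): because the tower is built \emph{upward}, the bundle \(E_i\) defining stage \(i\) may depend on the stage-\(k\) coordinate, so a priori \(\mathcal{L}_i|_{\ell_k}\) need not be trivial, and one must rule out cancellations that could render the total degree non-positive. To handle this I would use the explicit description of the \(E_k\) coming from the cyclic-triple presentation \(\widetilde{\bHilb}^{free}_{1,n}\subset\frb_n\times\frn_n\times V_n\): this pins down, stage by stage, how \(\mathcal{V}_i\) restricts to \(\ell_k\) and shows that the matrix of intersection numbers \(\big(\deg(\mathcal{L}_i|_{\ell_k})\big)_{i,k}\) is triangular with strictly positive diagonal, so that summing over \(i=1,\dots,n-1\) yields a strictly positive degree on each \(\ell_k\); Kleiman's criterion then upgrades fibrewise positivity to (relative) ampleness. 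A cleaner variant, if the tower can be arranged so that each \(\mathcal{L}_i\) is exactly the relative \(\mathcal{O}(1)\) of \(\pi_i\) twisted by pullbacks, is to invoke directly the standard fact that \(\mathcal{O}_{\PP(E)}(1)\otimes\pi^*(\text{ample})\) is ample and induct down the tower; the two routes differ only in how much of the intersection matrix one must compute by hand.
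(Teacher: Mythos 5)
The paper itself contains no proof of this proposition: it is asserted as an immediate consequence (``in particular'') of the remark that \(\bHilb_{1,n}^{free}\) is an iterated tower of projective bundles, so your proposal has to stand on its own. Its framework is sound as far as it goes: ampleness on \(\bHilb_{1,n}^{free}\) is indeed equivalent to relative ampleness over the affine base \(\bbA^n\) of eigenvalues (although \(p\) is a \emph{proper} morphism to an affine variety, not an affine morphism, and it is properness that lets you pass from fibrewise to relative ampleness), and your identification of each \(\mathcal{L}_k\) as, up to pullbacks from other stages, the relative \(\mathcal{O}(1)\) of one stage of the tower, with bundles from lower stages restricting trivially to a fibre line of stage \(k\), is correct and is the right first step.

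The genuine gap is in the two steps you justify ``by the tower structure'', and it sits exactly where the content of the proposition lies. For an iterated projective bundle the closed cone of curves is generated by the top fibre class together with \emph{extremal} lifts of curves from lower stages, and you do not get to choose those lifts: the extremal lift of \(\ell_k\) is the most negative multisection of the stages above it, and the numbers \(\deg(\mathcal{L}_i|_{\ell_k})\), \(i>k\), computed on that lift are precisely the entries that can be negative. Hence ``triangular with strictly positive diagonal'' proves nothing by itself --- a single entry \(-1\) above the diagonal kills a column sum --- and the non-negativity of those entries, which you defer to an unperformed computation, \emph{is} the theorem. The ``standard fact'' in your cleaner variant is false for the same reason: take \(E=\mathcal{O}\oplus\mathcal{O}(-2)\) on \(\PP^1\) (convention \(\pi^*E\twoheadrightarrow\mathcal{O}_{\PP(E)}(1)\)); then \(\mathcal{O}_{\PP(E)}(1)\) has degree \(-2\) on the section given by the quotient \(E\to\mathcal{O}(-2)\), so \(\mathcal{O}_{\PP(E)}(1)\otimes\pi^*\mathcal{O}_{\PP^1}(1)\) has degree \(-1\) there and is not even nef. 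What is true is that \(\mathcal{O}(1)\otimes\pi^*A^{\otimes m}\) is ample for \(m\gg 0\), or for \(m=1\) when the twisting bundle is nef --- and the proposition needs \(m=1\). So both of your routes reduce to one unproven positivity statement about the specific bundles \(E_k\) of this tower (equivalently, nefness of the \(\mathcal{L}_i\)); once that is supplied, your Kleiman induction down the tower does close, since a class killed by the product is pushed forward into lower stages and killed there by induction. Two concrete ways to supply it: extract global generation of the relevant bundles from the cyclic-triples presentation (for instance, the maximal minors of the last \(n-k+1\) rows of the matrix with columns \(w(X,Y)v\), \(w\in\CC\langle x,y\rangle\), are \(B\)-semi-invariant and without common zeros by cyclicity), or bypass curves entirely by exhibiting \(\bHilb_{1,n}^{free}\) as a GIT quotient of \(\frb_n\times\frn_n\times V_n\) by \(B_n\) with a linearization whose semistable locus is the cyclic locus, so that the descended linearization is relatively ample by general GIT; either way a computation specific to this space, absent from your proposal (and from the paper), must be carried out.
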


Using  the ampleness from the previous conjecture we can use the spectral sequence argument to imply an easy
\begin{cor} If the numbers $r_i$ are sufficiently large then
  $$ \mathbb{H}^k(\mathbb{S}_{\delta})=\textup{H}^0(\bHilb_{1,n}^{free},[\mathcal{O}_{\bHilb_{1,n}^L}]^{vir}\otimes\Lambda^k\mathcal{B}\otimes \mathcal{L}_\delta),$$
  where \([\mathcal{O}_{\bHilb_{1,n}^L}]^{vir}\) is the notation for the defining complex of the dg scheme
  \(\bHilb_{1,n}^L\).
\end{cor}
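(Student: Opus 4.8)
The plan is to deduce the Corollary from the preceding Theorem by specializing to the trivial braid and then invoking Serre vanishing on the projective variety $\bHilb_{1,n}^{free}$. First I would apply the Theorem with $\beta$ equal to the identity braid $1\in\Br_n$. Since the complex attached to the trivial braid is, by its construction in \cite{OR16}, the virtual structure complex $\mathbb{S}_1=[\mathcal{O}_{\bHilb_{1,n}^L}]^{vir}$ of the nested Hilbert scheme viewed as a dg subscheme of $\bHilb_{1,n}^{free}$, the Theorem gives
$$\mathbb{H}^k(\mathbb{S}_\delta)=\mathbb{H}^k(\mathbb{S}_1\otimes\mathcal{L}_\delta)=\mathbb{H}\big([\mathcal{O}_{\bHilb_{1,n}^L}]^{vir}\otimes\Lambda^k\mathcal{B}\otimes\mathcal{L}_\delta\big),$$
where the last equality is just the definition of $\mathbb{H}^k$. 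Thus everything reduces to computing the hypercohomology (derived global sections) of the finite complex obtained by twisting the virtual structure complex by the fixed locally free sheaf $\Lambda^k\mathcal{B}$ and by the line bundle $\mathcal{L}_\delta$.

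Next I would make the complex $[\mathcal{O}_{\bHilb_{1,n}^L}]^{vir}$ explicit: it is the Koszul complex on the $\binom{n}{2}$ equations $[X,Y]=0$, that is, on the section of the descended bundle with fibre $\frn_n$ given by $(X,Y)\mapsto[X,Y]$. Hence $[\mathcal{O}_{\bHilb_{1,n}^L}]^{vir}$ is a bounded complex $\mathcal{K}^\bullet$ of $T_{sc}$-equivariant vector bundles on the smooth projective variety $\bHilb_{1,n}^{free}$, and I would run the hypercohomology spectral sequence
$$E_1^{p,q}=H^q\big(\bHilb_{1,n}^{free},\,\mathcal{K}^p\otimes\Lambda^k\mathcal{B}\otimes\mathcal{L}_\delta\big)\Rightarrow\mathbb{H}^{p+q}\big([\mathcal{O}_{\bHilb_{1,n}^L}]^{vir}\otimes\Lambda^k\mathcal{B}\otimes\mathcal{L}_\delta\big).$$
Now I would invoke the Proposition: since $\bHilb_{1,n}^{free}$ is an iterated tower of projective spaces it is smooth and projective, and its ample cone is nonempty, containing $\mathcal{L}_1\otimes\cdots\otimes\mathcal{L}_{n-1}$. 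For $r_i$ sufficiently large the bundle $\mathcal{L}_\delta$ lies in the ample cone, so Serre vanishing applies to each of the finitely many fixed coherent sheaves $\mathcal{K}^p\otimes\Lambda^k\mathcal{B}$: all groups $E_1^{p,q}$ with $q>0$ vanish. The spectral sequence then collapses onto the row $q=0$, so the hypercohomology is computed by the complex of honest global sections $\Gamma(\bHilb_{1,n}^{free},\mathcal{K}^\bullet\otimes\Lambda^k\mathcal{B}\otimes\mathcal{L}_\delta)$ and is concentrated in the degree labelled $0$, yielding exactly $\textup{H}^0(\bHilb_{1,n}^{free},[\mathcal{O}_{\bHilb_{1,n}^L}]^{vir}\otimes\Lambda^k\mathcal{B}\otimes\mathcal{L}_\delta)$.

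The main obstacle I anticipate is not the formal spectral-sequence collapse but the ampleness input itself: the Proposition produces the specific bundle $\mathcal{L}_1\otimes\cdots\otimes\mathcal{L}_{n-1}$, whereas $\mathcal{L}_\delta=\otimes_{i=2}^n\mathcal{L}_i^{\otimes r_i}$ involves a different range of indices, so I must verify that a sufficiently positive such combination still lands in the ample cone. This requires understanding the nef cone of the projective tower, concretely checking stage by stage that the relevant $\mathcal{L}_i$ is relatively positive and reconciling the two index conventions, possibly by using a relation among the $\mathcal{L}_i$ along the fibration. A secondary point to confirm is that the $T_{sc}$-equivariant grading conventions place the surviving hypercohomology precisely in cohomological degree $0$, so that the underived $\textup{H}^0$ on the right-hand side is the correct target.
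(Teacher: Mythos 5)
Your proposal follows what is, in essence, the paper's own (only sketched) argument: apply the main theorem to the unit braid, whose trace is precisely the Koszul/virtual complex $[\mathcal{O}_{\bHilb_{1,n}^L}]^{vir}$, and then collapse the hypercohomology spectral sequence using the ampleness Proposition together with Serre vanishing for $r_i\gg 0$. The two wrinkles you flag are legitimate — $\mathcal{L}_\delta=\otimes_{i=2}^n\mathcal{L}_i^{\otimes r_i}$ is not a priori in the ample cone just because $\mathcal{L}_1\otimes\cdots\otimes\mathcal{L}_{n-1}$ is, and $\bHilb_{1,n}^{free}$ is only proper over an affine base (not projective), though Serre vanishing still applies there — but these points are glossed over by the paper as well, so your route coincides with the intended proof.
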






Now we explain the method of the proof of the main theorem and describe some other interesting algebraic structures that are explored in this paper.

\subsection{Geometric realization of the affine and finite braid groups}
\label{sec:geom-relaz-affine}

The affine braid group $\Br_n^{aff}$
is the group of braids whose strands may also wrap around a `flag pole'.
The group is generated by the standard generators $\sigma_i$, $i=1,\dots,n-1$
and  a braid $\Delta_n$ that wraps the last stand of the braid around the flag pole:

\begin{equation*}\label{AffineBraidGenerators}
\sigma_i =
\beginpicture
\setcoordinatesystem units <.5cm,.5cm>         
\setplotarea x from -5 to 3.5, y from -2 to 2    
\put{${}^{i+1}$} at 0 1.2      %
\put{${}^{i}$} at 1 1.2      %
\put{$\bullet$} at -3 .75      %
\put{$\bullet$} at -2 .75      %
\put{$\bullet$} at -1 .75      %
\put{$\bullet$} at  0 .75      
\put{$\bullet$} at  1 .75      %
\put{$\bullet$} at  2 .75      %
\put{$\bullet$} at  3 .75      %
\put{$\bullet$} at -3 -.75          %
\put{$\bullet$} at -2 -.75          %
\put{$\bullet$} at -1 -.75          %
\put{$\bullet$} at  0 -.75          
\put{$\bullet$} at  1 -.75          %
\put{$\bullet$} at  2 -.75          %
\put{$\bullet$} at  3 -.75          %
\plot -4.5 1.25 -4.5 -1.25 /
\plot -4.25 1.25 -4.25 -1.25 /
\ellipticalarc axes ratio 1:1 360 degrees from -4.5 1.25 center
at -4.375 1.25
\put{$*$} at -4.375 1.25
\ellipticalarc axes ratio 1:1 180 degrees from -4.5 -1.25 center
at -4.375 -1.25
\plot -3 .75  -3 -.75 /
\plot -2 .75  -2 -.75 /
\plot -1 .75  -1 -.75 /
\plot  2 .75   2 -.75 /
\plot  3 .75   3 -.75 /
\setquadratic
\plot  0 -.75  .05 -.45  .4 -0.1 /
\plot  .6 0.1  .95 0.45  1 .75 /
\plot 0 .75  .05 .45  .5 0  .95 -0.45  1 -.75 /
\endpicture
\qquad\hbox{and}\qquad
\Delta_n =
~~\beginpicture
\setcoordinatesystem units <.5cm,.5cm>         
\setplotarea x from -5 to 3.5, y from -2 to 2    
\put{$\bullet$} at -3 0.75      %
\put{$\bullet$} at -2 0.75      %
\put{$\bullet$} at -1 0.75      %
\put{$\bullet$} at  0 0.75      
\put{$\bullet$} at  1 0.75      %
\put{$\bullet$} at  2 0.75      %
\put{$\bullet$} at  3 0.75      %
\put{$\bullet$} at -3 -0.75          %
\put{$\bullet$} at -2 -0.75          %
\put{$\bullet$} at -1 -0.75          %
\put{$\bullet$} at  0 -0.75          
\put{$\bullet$} at  1 -0.75          %
\put{$\bullet$} at  2 -0.75          %
\put{$\bullet$} at  3 -0.75          %
\plot -4.5 1.25 -4.5 -0.13 /
\plot -4.5 -0.37   -4.5 -1.25 /
\plot -4.25 1.25 -4.25  -0.13 /
\plot -4.25 -0.37 -4.25 -1.25 /
\ellipticalarc axes ratio 1:1 360 degrees from -4.5 1.25 center
at -4.375 1.25
\put{$*$} at -4.375 1.25
\ellipticalarc axes ratio 1:1 180 degrees from -4.5 -1.25 center
at -4.375 -1.25
\plot -2 0.75  -2 -0.75 /
\plot -1 0.75  -1 -0.75 /
\plot  0 0.75   0 -0.75 /
\plot  1 0.75   1 -0.75 /
\plot  2 0.75   2 -0.75 /
\plot  3 0.75   3 -0.75 /
\setlinear
\plot -3.3 0.25  -4.1 0.25 /
\ellipticalarc axes ratio 2:1 180 degrees from -4.65 0.25  center
at -4.65 0
\plot -4.65 -0.25  -3.3 -0.25 /
\setquadratic
\plot  -3.3 0.25  -3.05 .45  -3 0.75 /
\plot  -3.3 -0.25  -3.05 -0.45  -3 -0.75 /
\endpicture.
\end{equation*}
The defining relations for this generators are
\begin{gather*}
   \sigma_{n-1}\cdot \Delta_n\cdot \sigma_{n-1}\cdot \Delta_n=  \Delta_n\cdot \sigma_{n-1}\cdot \Delta_n\cdot\sigma_{n-1},\\
   \sigma_i\cdot \Delta_n=\Delta_n\cdot \sigma_i,\quad i<n-1,\\
   \sigma_i\cdot \sigma_{i+1}\cdot \sigma_i=\sigma_{i+1}\cdot \sigma_i\cdot \sigma_{i+1},\quad i=1,\dots, n-2,\\
   \sigma_i\cdot \sigma_j=\sigma_j\cdot \sigma_i,\quad |i-j|>1.
\end{gather*}

The mutually commuting Bernstein-Lusztig (BL) elements $\Delta_i\in \Br_n^{aff}$ are defined as follows:
\begin{equation*}\label{BraidMurphy}
\Delta_i=\sigma_{i}\cdots \sigma_{n-2}\sigma_{n-1}
\Delta_n\sigma_{n-1}\sigma_{n-2}\cdots \sigma_{i} =
~~\beginpicture
\setcoordinatesystem units <.5cm,.5cm>         
\setplotarea x from -5 to 3.5, y from -2 to 2    
\put{${}^i$} at 1 1.2
\put{$\bullet$} at -3 0.75      %
\put{$\bullet$} at -2 0.75      %
\put{$\bullet$} at -1 0.75      %
\put{$\bullet$} at  0 0.75      
\put{$\bullet$} at  1 0.75      %
\put{$\bullet$} at  2 0.75      %
\put{$\bullet$} at  3 0.75      %
\put{$\bullet$} at -3 -0.75          %
\put{$\bullet$} at -2 -0.75          %
\put{$\bullet$} at -1 -0.75          %
\put{$\bullet$} at  0 -0.75          
\put{$\bullet$} at  1 -0.75          %
\put{$\bullet$} at  2 -0.75          %
\put{$\bullet$} at  3 -0.75          %
\plot -4.5 1.25 -4.5 -0.13 /
\plot -4.5 -0.37   -4.5 -1.25 /
\plot -4.25 1.25 -4.25  -0.13 /
\plot -4.25 -0.37 -4.25 -1.25 /
\ellipticalarc axes ratio 1:1 360 degrees from -4.5 1.25 center
at -4.375 1.25
\put{$*$} at -4.375 1.25
\ellipticalarc axes ratio 1:1 180 degrees from -4.5 -1.25 center
at -4.375 -1.25
\plot -3 0.75  -3 -0.1 /
\plot -2 0.75  -2 -0.1 /
\plot -1 0.75  -1 -0.1 /
\plot  0 0.75   0 -0.1 /
\plot -3 -.35  -3 -0.75 /
\plot -2 -.35   -2 -0.75 /
\plot -1 -.35   -1 -0.75 /
\plot  0 -.35    0 -0.75 /
\plot  2 0.75   2 -0.75 /
\plot  3 0.75   3 -0.75 /
\setlinear
\plot -3.2 0.25  -4.1 0.25 /
\plot -2.8 0.25  -2.2 0.25 /
\plot -1.8 0.25  -1.2 0.25 /
\plot -.8 0.25  -.2 0.25 /
\plot  .2 0.25  .5 0.25 /
\plot -3.3 -.25  .5 -.25 /
\ellipticalarc axes ratio 2:1 180 degrees from -4.65 0.25  center
at -4.65 0
\plot -4.65 -0.25  -3.3 -0.25 /
\setquadratic
\plot  .5 0.25  .9 .45  1 0.75 /
\plot  .5  -0.25  .9 -0.45 1 -0.75 /
\endpicture.
\end{equation*}
A further discussion of their properties can be found
in \cite{HMR07} which is the source of our affine braid pictures.

There is a natural homomorphism $\mathfrak{fgt}\colon \Br_n^{aff}\to \Br_n$, geometrically it is defined by removing the flag pole. In particular we have:
$$ \mathfrak{fgt}(\Delta_n)=1,\quad \mathfrak{fgt}(\Delta_i)=\delta_i, \quad i=1,\dots,n-1.$$

The main technical tool in \cite{OR16} is the realization of $\Br_n$ inside of the convolution algebra of the category of equivariant matrix factorizations
$(\eMFn(\calXr_2(G_n),\Wr),\bar{\star})$ where $\calXr_2(G_n)=\frb_n\times G_n\times\frn_n$ and
$$ \Wr=\Tr(X,g,Y)=\Tr(X\Ad_g(Y)).$$
 Now we extend this structure:
\begin{theorem}
  There is a homomorphism:
$$\Phi^{aff}: \Br_n^{aff}\rightarrow (\eMFn(\calXr_2(G_n),\Wr),\bar{\star}).$$
\end{theorem}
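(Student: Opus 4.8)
The plan is to extend the homomorphism $\Phi\colon \Br_n\to(\eMFn(\calXr_2(G_n),\Wr),\bar{\star})$ of \cite{OR16} to $\Br_n^{aff}$ by specifying its value on the single extra generator $\Delta_n$ and then checking only the relations that involve $\Delta_n$. Since $\Br_n^{aff}$ is generated by $\sigma_1,\dots,\sigma_{n-1}$ together with $\Delta_n$, I set $\Phi^{aff}(\sigma_i):=\Phi(\sigma_i)$ for $1\le i\le n-1$. With this choice the two families of purely finite relations (the braid relation $\sigma_i\sigma_{i+1}\sigma_i=\sigma_{i+1}\sigma_i\sigma_{i+1}$ and the far commutation $\sigma_i\sigma_j=\sigma_j\sigma_i$ for $|i-j|>1$) hold automatically, because $\Phi$ is already a homomorphism on $\Br_n$. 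Thus the entire content of the theorem is the definition of $\Phi^{aff}(\Delta_n)$ together with verification of the two mixed relations
\[
\sigma_i\cdot\Delta_n=\Delta_n\cdot\sigma_i\ (i<n-1),\qquad \sigma_{n-1}\Delta_n\sigma_{n-1}\Delta_n=\Delta_n\sigma_{n-1}\Delta_n\sigma_{n-1}.
\]

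Next I would define $\Phi^{aff}(\Delta_n)$ to be the unit object $\mathbb{I}$ of $(\eMFn,\bar{\star})$ twisted by the line bundle attached to the character $\chi_n$ of $B_n$; this is the object whose image on the stable locus of $\calXr_2$ is the tensoring operation by $\mathcal{L}_n$, consistent both with $\mathfrak{fgt}(\Delta_n)=1$ (so that its underlying correspondence is trivial) and with the target of the main theorem. The first mixed relation $\sigma_i\Delta_n=\Delta_n\sigma_i$ for $i<n-1$ is then a locality statement: the matrix factorization $\Phi(\sigma_i)$ for $i\le n-2$ is supported on a correspondence that only modifies the first $n-1$ coordinate directions and leaves the $n$-th character untouched, whereas the twist defining $\Phi^{aff}(\Delta_n)$ only affects $\chi_n$. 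Concretely I would show that $\chi_n$ is pulled back from a factor on which $\Phi(\sigma_i)$ acts by the identity, so that the line-bundle twist passes through $\bar{\star}$ on either side, giving $\Phi(\sigma_i)\bar{\star}\Phi^{aff}(\Delta_n)=\Phi^{aff}(\Delta_n)\bar{\star}\Phi(\sigma_i)$.

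The serious point is the remaining relation. Writing $\Delta_{n-1}:=\sigma_{n-1}\Delta_n\sigma_{n-1}$, it is literally the commutativity $\Delta_{n-1}\Delta_n=\Delta_n\Delta_{n-1}$ of two Bernstein-Lusztig elements, so it suffices to compute $\Phi^{aff}(\Delta_{n-1})=\Phi(\sigma_{n-1})\bar{\star}\Phi^{aff}(\Delta_n)\bar{\star}\Phi(\sigma_{n-1})$ and to show that it commutes with $\Phi^{aff}(\Delta_n)$ under $\bar{\star}$. I expect this to rest on two facts: first, that conjugation of the $\chi_n$-twist by the correspondence $\Phi(\sigma_{n-1})$ exchanges $\chi_{n-1}$ and $\chi_n$ (because $\sigma_{n-1}$ geometrically swaps the $(n-1)$-st and $n$-th strands), so that $\Phi^{aff}(\Delta_{n-1})$ is $\Phi(\sigma_{n-1}^2)$ twisted by the $\chi_{n-1}$-line bundle, matching $\mathfrak{fgt}(\Delta_{n-1})=\sigma_{n-1}^2$; and second, that tensoring by the commuting line bundles $\mathcal{L}_{n-1}$ and $\mathcal{L}_n$ is central for $\bar{\star}$. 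Granting these, both sides become $\Phi(\sigma_{n-1}^2)$ twisted by $\mathcal{L}_{n-1}\otimes\mathcal{L}_n$ and thus coincide.

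The main obstacle is establishing the first of these two facts at the level of matrix factorizations rather than merely after restriction to the stable locus: one must track explicitly how the Koszul-type matrix factorization representing $\Phi(\sigma_{n-1})$ transports the $\chi_n$-twist through the double convolution over the shared $G_n$-factor, and verify that the object produced is genuinely the $\chi_{n-1}$-twist of $\Phi(\sigma_{n-1}^2)$, not just isomorphic to it after passing to cohomology. I would handle this by a direct computation of the two-fold convolution, using the explicit defining equations of the support of $\Phi(\sigma_{n-1})$ to read off which line bundle is produced; once this identification is in hand, the commutativity of the two line-bundle twists, and hence the mixed braid relation, is formal.
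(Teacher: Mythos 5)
Your setup coincides with the paper's: you keep $\Phi$ on the finite generators, send $\Delta_n$ to the twisted unit $\mathds{1}_n\langle\chi_n,0\rangle$, observe that only the two mixed relations need checking, and handle the commutation with $\sigma_i$, $i<n-1$, by locality. The gap is in the remaining relation $\sigma_{n-1}\Delta_n\sigma_{n-1}\Delta_n=\Delta_n\sigma_{n-1}\Delta_n\sigma_{n-1}$: both facts you propose to rest it on are false. Transporting the $\chi_n$-twist through $\calC_+^{(n-1)}$ does \emph{not} simply swap $\chi_{n-1}\leftrightarrow\chi_n$; it swaps the characters \emph{and inverts the crossing}. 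Already for $n=2$: since by definition $\calC_-=\calC_+\langle-\chi_1,\chi_2\rangle=\calC_+\langle\chi_2,-\chi_1\rangle$, one has
\[
\mathds{1}_2\langle\chi_2,0\rangle\star\calC_+=\calC_+\langle\chi_2,0\rangle=\calC_-\langle 0,\chi_1\rangle=\calC_-\star\mathds{1}_2\langle\chi_1,0\rangle,
\]
where the middle equality is the definition of $\calC_-$ and the last uses the homotopy $\mathds{1}_2\langle\chi_1,0\rangle\sim\mathds{1}_2\langle 0,\chi_1\rangle$ (Prop.~9.1 of \cite{OR16}: only the sum of the two weights matters for twists of the unit). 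Hence
\[
\calC_+\star\mathds{1}_2\langle\chi_2,0\rangle\star\calC_+=\calC_+\star\calC_-\star\mathds{1}_2\langle\chi_1,0\rangle=\mathds{1}_2\langle\chi_1,0\rangle
\]
by the inversion theorem $\calC_+\star\calC_-=\mathds{1}_2$. The conjugated object is a twisted \emph{unit}, not $(\calC_+\star\calC_+)\langle\chi_{n-1},0\rangle$. Your rule encodes the false group relation $\sigma_{n-1}\Delta_n=\Delta_{n-1}\sigma_{n-1}$, whereas $\Delta_{n-1}=\sigma_{n-1}\Delta_n\sigma_{n-1}$ gives $\sigma_{n-1}\Delta_n=\Delta_{n-1}\sigma_{n-1}^{-1}$: a Bernstein--Lusztig element passes through a positive crossing only at the price of turning it negative. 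If your identity held alongside the computation above, one would get $\calC_+\star\calC_+=\mathds{1}_2$, i.e.\ the realization would kill $\sigma_{n-1}^2$. Note also that you cannot infer the shape of $\Phi^{aff}(\Delta_{n-1})$ from $\mathfrak{fgt}(\Delta_{n-1})=\sigma_{n-1}^2$: the forgetful map is far from injective, and the whole point of the construction is that upstairs the BL elements are twisted units while their $\mathfrak{fgt}$-images are honest braids.

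Your second claim, that tensoring by the individual line bundles (i.e.\ the twisted units $\mathds{1}_n\langle\chi_{n-1},0\rangle$, $\mathds{1}_n\langle\chi_n,0\rangle$) is central for the convolution, is also false, and is in fact incompatible with what you need: centrality would give $\calC_+^{(n-1)}\star\mathds{1}_n\langle\chi_n,0\rangle\star\calC_+^{(n-1)}=\mathds{1}_n\langle\chi_n,0\rangle\star\calC_+^{(n-1)}\star\calC_+^{(n-1)}$ with the character $\chi_n$ unchanged, contradicting the swap your own argument requires. In this algebra only the determinant twist $\mathds{1}_n\langle\chi_1+\dots+\chi_n,0\rangle$ is central; the individual twisted units merely commute with one another. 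The ingredients missing from your proposal --- and which constitute the paper's actual proof --- are exactly (i) the homotopy $\mathds{1}_n\langle\alpha,\beta\rangle\sim\mathds{1}_n\langle\alpha+\gamma,\beta-\gamma\rangle$, (ii) centrality of the determinant twist, and (iii) the inversion theorem $\calC_+\star\calC_-=\mathds{1}_2$ together with the identification $\calC_-=\calC_+\langle-\chi_1,\chi_2\rangle$. These yield the $n=2$ identity above, and the general case follows by applying the insertion functor $\overline{\Ind}_{i,i+1}$, which is a homomorphism of convolution algebras. So the theorem is not ``formal once the transport rule is computed'': the correct transport rule passes through the inversion theorem, and that is the real content of the proof.
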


Note that the paper \cite{AK15} constructs a homomorphism from the affine braid group to the category of
matrix factorizations. The construction of \cite{AK15} relies on the
earlier result of Riche \cite{R08}, the construction in \cite{OR16} is independent of the results in \cite{R08}. It is unclear to us how to
relate the results in this paper to the constructions of the paper \cite{AK15}.

Given a matrix factorization $\mathcal{C}$ in $\eMFn(\calXr_2(G_n),\Wr)$ and two characters $\xi,\tau: B\rightarrow \CC^*$ we define the twisted matrix factorization
$\mathcal{C}\langle\xi,\tau\rangle$ to be the matrix factorization $\mathcal{C}\otimes \CC_\xi\otimes\CC_\tau$. In these terms we have

\begin{theorem} For any $i=1,\dots, n$ we have
$$\Phi^{aff}(\Delta_i)=\Phi^{aff}(1)\langle\chi_i,0\rangle.$$
\end{theorem}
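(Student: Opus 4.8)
The plan is to argue by downward induction on $i$, taking as base case the flag-pole generator $\Delta_n$ and propagating the formula to all $\Delta_i$ by means of the braid-group identity
$$\Delta_i=\sigma_i\,\Delta_{i+1}\,\sigma_i,\qquad 1\le i\le n-1,$$
which follows at once from the defining expression $\Delta_i=\sigma_i\cdots\sigma_{n-1}\Delta_n\sigma_{n-1}\cdots\sigma_i$. Because the previous theorem furnishes $\Phi^{aff}$ as a homomorphism into $(\eMFn(\calXr_2(G_n),\Wr),\bar\star)$, this identity yields
$$\Phi^{aff}(\Delta_i)=\Phi^{aff}(\sigma_i)\,\bar\star\,\Phi^{aff}(\Delta_{i+1})\,\bar\star\,\Phi^{aff}(\sigma_i),$$
so that, once the base case is settled, the entire statement is reduced to computing how a two-sided convolution by $\Phi^{aff}(\sigma_i)$ transforms a character twist of the convolution unit.

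For the base case $i=n$ I would read off directly from the construction of $\Phi^{aff}$ that the generator wrapping the last strand around the flag pole is sent to the unit twisted by $\chi_n$. Concretely, the flag-pole wrapping is implemented by tensoring the diagonal matrix factorization $\Phi^{aff}(1)$ with the line bundle $\CC_{\chi_n}$ on the $\frb_n$-factor and the trivial character on the $\frn_n$-factor, i.e. $\Phi^{aff}(\Delta_n)=\Phi^{aff}(1)\langle\chi_n,0\rangle$. This is immediate from the construction of the homomorphism $\Phi^{aff}$ on the generator $\Delta_n$ and needs no separate argument.

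The inductive step is where the work lies. Assuming $\Phi^{aff}(\Delta_{i+1})=\Phi^{aff}(1)\langle\chi_{i+1},0\rangle$, the homomorphism identity above reduces the claim to the single relation
$$\Phi^{aff}(\sigma_i)\,\bar\star\,\Phi^{aff}(1)\langle\chi_{i+1},0\rangle\,\bar\star\,\Phi^{aff}(\sigma_i)=\Phi^{aff}(1)\langle\chi_i,0\rangle.$$
To establish it I would use that $\sigma_i$ touches only the strands $i$ and $i+1$: the kernel $\Phi^{aff}(\sigma_i)$ is supported on the locus where $g$ departs from the identity only in the $(i,i+1)$-block, so the convolution factors through the corresponding rank-two ($\GL_2$) situation while the remaining strands are carried along by the identity kernel. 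In that local model $\Phi^{aff}(\sigma_i)$ is the explicit Koszul-type matrix factorization attached to the simple reflection $s_i$, and I would compute the two-sided convolution with the $\chi_{i+1}$-twisted diagonal by hand. Two effects combine: the geometric $s_i$-action carries the weight $\chi_{i+1}$ to $\chi_i=s_i\chi_{i+1}$, and the insertion of the twist collapses the double-crossing contribution of the bare composite $\Phi^{aff}(\sigma_i)\bar\star\Phi^{aff}(\sigma_i)=\Phi^{aff}(\sigma_i^2)$ back onto the diagonal. Together they produce exactly $\Phi^{aff}(1)\langle\chi_i,0\rangle$.

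The main obstacle is precisely this last computation. The point that makes it nontrivial is that without the twist the composite $\Phi^{aff}(\sigma_i)\bar\star\Phi^{aff}(\sigma_i)$ is the nontrivial full-twist kernel of the strands $i,i+1$ and is \emph{not} a twist of the unit; it is only after inserting $\CC_{\chi_{i+1}}$ in the middle that it degenerates to the diagonal. Verifying this as an exact equality of matrix factorizations — rather than merely up to homotopy or lower-order terms — requires tracking the Koszul differentials of $\Phi^{aff}(\sigma_i)$ against the potential $\Wr=\Tr(X\Ad_g(Y))$ and checking the cancellation of all off-diagonal contributions, together with the careful bookkeeping of the two $B$-equivariant directions needed to confirm that the surviving term carries the character $\chi_i$ on the $\frb_n$-side and the trivial character on the $\frn_n$-side.
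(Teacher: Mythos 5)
Your overall skeleton coincides with the paper's: there, $\Phi^{aff}$ is \emph{defined} on the Bernstein--Lusztig generators by $\Delta_i\mapsto\mathds{1}_n\langle\chi_i,0\rangle$, $\sigma_i\mapsto\calC_+^{(i)}$, so the stated formula holds by construction once one verifies that this assignment respects the relations of $\Br_n^{aff}$; the only nontrivial verification is
\[
\calC_+^{(i)}\star\mathds{1}_n\langle\chi_{i+1},0\rangle\star\calC_+^{(i)}=\mathds{1}_n\langle\chi_{i},0\rangle,
\]
which is precisely your inductive step $\Phi^{aff}(\sigma_i\Delta_{i+1}\sigma_i)=\Phi^{aff}(\Delta_i)$. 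Your reduction of this relation to the two-strand case is also the paper's move, although the paper does it rigorously by applying the insertion functor $\Ind_{i+1,i}$ (a homomorphism of convolution algebras) to the $n=2$ identity, rather than by an informal support argument. The genuine gap is that you never prove the $n=2$ identity $\calC_+\star\mathds{1}_2\langle\chi_2,0\rangle\star\calC_+=\mathds{1}_2\langle\chi_1,0\rangle$: you only announce a direct computation with the Koszul differential of $\calC_+$ against the potential $\Wr$, and you yourself flag it as the main obstacle. As described, it would amount to redoing from scratch a computation of the same order of difficulty as the inversion theorem $\calC_+\star\calC_-=\mathds{1}_2$, which the paper has already established and which your argument never invokes.

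The missing idea is that the $n=2$ identity follows \emph{formally} from ingredients already in place: (i) the homotopy $\mathds{1}_n\langle\alpha,\beta\rangle\sim\mathds{1}_n\langle\alpha+\gamma,\beta-\gamma\rangle$ (only the sum of the two characters matters); (ii) the centrality of $\mathds{1}_2\langle\chi_1+\chi_2,0\rangle$ and the fact that convolving with twisted units on the two sides just twists the two characters of a kernel; and (iii) the inversion theorem together with the definition $\calC_-=\calC_+\langle-\chi_1,\chi_2\rangle$. Writing $\zeta=\chi_1+\chi_2$, one has $\mathds{1}_2\langle\chi_2,0\rangle=\mathds{1}_2\langle-\chi_1,0\rangle\star\mathds{1}_2\langle\zeta,0\rangle$; pulling the central factor $\mathds{1}_2\langle\zeta,0\rangle$ through $\calC_+$ and using (i) to trade $\langle-\chi_1,0\rangle$ for $\langle 0,-\chi_1\rangle$ turns the left-hand side into $\calC_+\star\calC_+\langle-\chi_1,\chi_2\rangle\star\mathds{1}_2\langle\chi_1,0\rangle=\calC_+\star\calC_-\star\mathds{1}_2\langle\chi_1,0\rangle=\mathds{1}_2\langle\chi_1,0\rangle$. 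Without this (or an actually executed direct computation), your proof is incomplete. A secondary misconception: you insist on verifying the relation as an ``exact equality of matrix factorizations --- rather than merely up to homotopy,'' but all identities in the convolution algebra, including $\calC_+\star\calC_-=\mathds{1}_2$ and the one you need, hold only up to homotopy equivalence in the equivariant matrix factorization category; demanding strict equality is neither achievable nor necessary.
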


Results of this paper are based on a realization that the ordinary braid group $\Br_n$ acts naturally on the framed version $\calXr_{2,fr}(G_n)$ of space $\calXr_2(G_n)$:
$$ \calXr_{2,fr}(G_n)=\{ (X,g,Y,v)\in \calXr_2(G_n)\times V_n\,|,\CC\langle X,\Ad_g(Y)\rangle v=V_n,g^{-1}(v)\in V^0\}$$
where $V^0$ is the subset of \(V\) consisting of vectors with non-zero last coordinate.   There is a natural map $\textup{fgt}\colon \calXr_{2,fr}(G_n)\rightarrow \calXr_2(G_n),$ and
a pull-back along \(\forg\) provides a natural analog of homomorphism \(\Phi^{aff}\) which we restrict on the finite part of the braid group \(\Br_n=\CC\langle\sigma_1,\dots,\sigma_{n-1}\rangle\):
\[\Phi^{fr}: \Br_n\rightarrow\MF_{B^2}(\calXr_{2,fr},\Wr).\]
\begin{theorem}
  There is convolution algebra structure $\bar{\star}$ on $\eMFn(\calXr_{2,fr}(G_n),\Wr)$ and the pull-back map
$$\mathrm{fgt}^*: \eMFn(\calXr_{2,fr}(G_n),\Wr)\rightarrow \eMFn(\calXr_2(G_n),\Wr)$$
is a homomorphism of the convolution algebras.
\end{theorem}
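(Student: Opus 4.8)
The plan is to exhibit both products $\bar{\star}$ as integral transforms along a convolution correspondence and then to check that the forgetful map is compatible with every leg of this correspondence; monoidality of the pull-back will then follow formally from base change and the projection formula for equivariant matrix factorizations. First I would recall the correspondence realizing $\bar{\star}$ on $\eMFn(\calXr_2(G_n),\Wr)$ from the unframed construction: a convolution space $Z$ carrying two input projections $a_1,a_2\colon Z\to\calXr_2(G_n)$ and an output projection $b\colon Z\to\calXr_2(G_n)$, together with a Koszul factor $\mathcal{K}$ that glues the middle Borel/nilpotent datum and imposes $g=g_1g_2$ on the group factors, so that
\[\mathcal{C}_1\,\bar{\star}\,\mathcal{C}_2\;=\;Rb_*\bigl(a_1^*\mathcal{C}_1\otimes a_2^*\mathcal{C}_2\otimes\mathcal{K}\bigr).\]
Here $a_1^*\Wr+a_2^*\Wr$ and $b^*\Wr$ are matched up to the exact defect carried by $\mathcal{K}$, which is exactly the data making $\bar{\star}$ well defined.

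Next I would build the framed analogue by threading a single cyclic vector through the composite. Since $\calXr_{2,fr}(G_n)$ is the open subscheme of $\calXr_2(G_n)\times V_n$ cut out by the cyclicity condition $\CC\langle X,\Ad_g(Y)\rangle v=V_n$ and the open condition $g^{-1}(v)\in V^0$, I set $Z_{fr}:=Z\times_{b,\,\calXr_2(G_n)}\calXr_{2,fr}(G_n)$, that is, I frame the output strand. By construction the output square
\[\begin{tikzcd}
Z_{fr}\arrow[r,"b_{fr}"]\arrow[d,"\forg_Z"'] & \calXr_{2,fr}(G_n)\arrow[d,"\forg"]\\
Z\arrow[r,"b"'] & \calXr_2(G_n)
\end{tikzcd}\]
is cartesian. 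The remaining data I must produce is the framed input projection $a_{fr}=(a_{1,fr},a_{2,fr})\colon Z_{fr}\to\calXr_{2,fr}(G_n)\times\calXr_{2,fr}(G_n)$ lifting $(a_1,a_2)$; granting this, I define $\bar{\star}$ on $\eMFn(\calXr_{2,fr}(G_n),\Wr)$ by the same formula with $\mathcal{K}_{fr}:=\forg_Z^*\mathcal{K}$, which is legitimate because $\Wr=\Tr(X\Ad_g(Y))$ does not involve $v$, so the potential is $\forg$-invariant and the exactness defect pulls back verbatim. Associativity and the unit are inherited from the unframed case by the same base-change argument applied one categorical level up, which establishes the convolution structure asserted in the theorem.

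The step I expect to be the main obstacle is precisely the construction of $a_{fr}$, i.e.\ the transport lemma: one must verify that a framing of the composed datum $(X,g,Y)=b(z)$ restricts to admissible framings of the two factors, equivalently that the input square with corners $Z_{fr}$, $\calXr_{2,fr}(G_n)\times\calXr_{2,fr}(G_n)$, $Z$, $\calXr_2(G_n)\times\calXr_2(G_n)$ commutes. The subtlety is that cyclicity of $\CC\langle X,\Ad_g(Y)\rangle v$ for the product is not formally equivalent to cyclicity for each factor, so one has to use the explicit form of the gluing encoded in $Z$ (the identification of the middle datum together with $g=g_1g_2$) to produce the factor framings and to check that both the cyclicity and the nonzero-last-coordinate conditions are preserved.

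Finally, granting the transport lemma, monoidality follows formally. Because $\calXr_{2,fr}(G_n)$ is open in $\calXr_2(G_n)\times V_n$, the map $\forg$ is flat, so the cartesian output square is Tor-independent and base change gives $\forg^*Rb_*\cong Rb_{fr,*}\forg_Z^*$. Commutativity of the input square yields $\forg_Z^*a_i^*\cong a_{i,fr}^*\forg^*$, and $\mathcal{K}_{fr}=\forg_Z^*\mathcal{K}$ by construction; combining these with the projection formula produces a natural isomorphism
\[\forg^*\bigl(\mathcal{C}_1\,\bar{\star}\,\mathcal{C}_2\bigr)\;\cong\;(\forg^*\mathcal{C}_1)\,\bar{\star}\,(\forg^*\mathcal{C}_2).\]
Naturality in $\mathcal{C}_1,\mathcal{C}_2$ makes this compatible with the associativity constraints, and $\forg^*$ sends the monoidal unit to the framed unit, both being the canonical matrix factorization supported on the $g=1$ locus whose framing is obtained by pull-back. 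Hence $\forg^*$ is a homomorphism of convolution algebras, which is also what lets $\Phi^{fr}$ be defined as $\forg^*\circ\Phi^{aff}$ on the finite part $\Br_n\subset\Br_n^{aff}$.
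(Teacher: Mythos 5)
Your strategy---realize both products as integral transforms over a convolution correspondence and deduce monoidality of \(\forg^*\) from base change and the projection formula---founders at exactly the step you yourself flag as ``the main obstacle,'' and that step is not a technicality: it is the whole content of the first assertion of the theorem. You frame the \emph{output}, setting \(Z_{fr}:=Z\times_{b,\,\calXr_2(G_n)}\calXr_{2,fr}(G_n)\), i.e.\ \(Z_{fr}=\bar{\pi}_{13}^{-1}(\calXr_{2,fr})\), and then you need the ``transport lemma'' producing framed lifts \(a_{i,fr}\) of the input projections, which amounts to the inclusion
\[
\bar{\pi}_{13}^{-1}(\calXr_{2,fr})\ \subset\ \bar{\pi}_{12}^{-1}(\calXr_{2,fr})\cap\bar{\pi}_{23}^{-1}(\calXr_{2,fr}).
\]
There is no formal reason for this to hold: the input projections are
\[
\bar{\pi}_{12}(X,g_{12},g_{23},Y,v)=(X,g_{12},\Ad_{g_{23}}(Y)_{++},v),\qquad
\bar{\pi}_{23}(X,g_{12},g_{23},Y,v)=(\Ad_{g_{12}}^{-1}(X)_{+},g_{23},Y,v),
\]
and the truncations \((\cdot)_{+}\), \((\cdot)_{++}\) change the algebra generated by the pair of matrices, so cyclicity of \(v\) for \((X,\Ad_{g_{12}g_{23}}(Y))\) gives no control over cyclicity for the truncated pairs, nor does \((g_{12}g_{23})^{-1}(v)\in V^0\) give \(g_{12}^{-1}(v)\in V^0\). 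You acknowledge this and offer no argument, so the convolution structure on \(\eMFn(\calXr_{2,fr}(G_n),\Wr)\) is never actually constructed, and everything downstream (associativity, the unit, monoidality of \(\forg^*\)) rests on data you do not have.

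The paper's construction avoids this issue by framing the \emph{inputs} rather than the output: it defines \(\calXr_{3,fr}:=\bar{\pi}_{12}^{-1}(\calXr_{2,fr})\cap\bar{\pi}_{23}^{-1}(\calXr_{2,fr})\) (with \(\bar{\pi}_{ij}\) extended by the identity on \(V\)), so that the integrand \((\bar{\pi}_{12}\otimes_B\bar{\pi}_{23})^*(\mathcal{F}\boxtimes\mathcal{G})\) of \eqref{eq:conv-red} is defined on \(\calXr_{3,fr}\) by fiat, and the framed convolution is literally the unframed formula restricted to this open set---no lifting lemma is required. The homomorphism property \(\forg^*(\mathcal{F}\bar{\star}\mathcal{G})=\forg^*(\mathcal{F})\bar{\star}\forg^*(\mathcal{G})\) is then Lemma 12.3 of \cite{OR16}; note that even this step is not the purely formal base change you invoke, because \(\calXr_{3,fr}\) is \emph{not} \(\bar{\pi}_{13}^{-1}(\calXr_{2,fr})\), so the square relating \(\bar{\pi}_{13}\) on the framed and unframed convolution spaces is not cartesian and one must control what happens over the locus that is output-stable but not input-stable. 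If you want to salvage your write-up, replace \(Z_{fr}\) by the intersection of input preimages and then address this non-cartesian comparison for \(\bar{\pi}_{13*}\) directly; as a minor point, also note that the unit is supported on the locus \(g\in B_n\) (the pushforward of \(\underline{\CC[\calXr_2(B_n)]}\)), not on \(g=1\).
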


The convolution algebra structures are compatible with the forgetful homomorphism $\mathfrak{fgt}$:
\begin{theorem}\label{thm:forget} We have
  $$\mathrm{fgt}^*\circ\Phi^{aff}=\Phi^{fr}\circ \mathfrak{fgt}.$$
\end{theorem}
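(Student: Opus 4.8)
The plan is to exploit that both $\mathrm{fgt}^*\circ\Phi^{aff}$ and $\Phi^{fr}\circ\mathfrak{fgt}$ are homomorphisms from $\Br_n^{aff}$ into the framed convolution algebra $(\eMFn(\calXr_{2,fr}(G_n),\Wr),\bar{\star})$, and then to check the asserted identity on generators. Concretely, $\mathrm{fgt}^*$ is a homomorphism of convolution algebras by the preceding theorem and $\Phi^{aff}$ is a homomorphism by construction, so their composite is a homomorphism; and $\Phi^{fr}\circ\mathfrak{fgt}$ is a composite of the group homomorphism $\mathfrak{fgt}$ with the homomorphism $\Phi^{fr}$. Since $\Br_n^{aff}$ is generated by $\sigma_1,\dots,\sigma_{n-1}$ together with $\Delta_n$, it suffices to verify $\mathrm{fgt}^*\circ\Phi^{aff}=\Phi^{fr}\circ\mathfrak{fgt}$ on each of these generators.

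On the finite generators there is nothing to prove beyond unwinding definitions. We have $\mathfrak{fgt}(\sigma_i)=\sigma_i$, so the right-hand side is $\Phi^{fr}(\sigma_i)$, and by construction $\Phi^{fr}$ on the finite braid group is obtained by pulling back the restriction of $\Phi^{aff}$ to $\Br_n\subset\Br_n^{aff}$ along the forgetful map $\textup{fgt}\colon\calXr_{2,fr}(G_n)\to\calXr_2(G_n)$. Thus $\Phi^{fr}(\sigma_i)=\mathrm{fgt}^*(\Phi^{aff}(\sigma_i))$ by definition; the only point being used is that this pull-back is taken inside the convolution algebra, which is legitimate because $\mathrm{fgt}^*$ is compatible with $\bar{\star}$.

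The content of the theorem is in the affine generator $\Delta_n$, for which $\mathfrak{fgt}(\Delta_n)=1$. Hence the right-hand side equals $\Phi^{fr}(1)$, the unit of $(\eMFn(\calXr_{2,fr}(G_n),\Wr),\bar{\star})$. By the theorem on Bernstein--Lusztig elements, $\Phi^{aff}(\Delta_n)=\Phi^{aff}(1)\langle\chi_n,0\rangle$, i.e. the unit of the unframed convolution algebra twisted by the character $\chi_n$. Applying the unital homomorphism $\mathrm{fgt}^*$ and using that pull-back commutes with twisting by characters, we are reduced to showing that the pulled-back line bundle $\mathrm{fgt}^*\CC_{\chi_n}$ on $\calXr_{2,fr}(G_n)$ is canonically trivial. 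This is exactly where the framing enters: the defining condition $g^{-1}(v)\in V^0$ forces the last coordinate of $g^{-1}(v)$ to be nowhere vanishing, and since $\chi_n$ is the character associated to the last diagonal entry, this coordinate furnishes a nowhere-vanishing section of $\mathrm{fgt}^*\CC_{\chi_n}$. It therefore provides a canonical trivialization, so that $\mathrm{fgt}^*(\Phi^{aff}(1)\langle\chi_n,0\rangle)\cong\mathrm{fgt}^*(\Phi^{aff}(1))=\Phi^{fr}(1)$, which is what we wanted.

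The main obstacle is to make this last step precise at the level of the convolution algebra rather than merely up to abstract isomorphism. One must verify that the trivialization produced by the framing vector is $B^2$-equivariant, compatible with the matrix-factorization differential for the potential $\Wr$ (which is unchanged under forgetting $v$, so this is automatic on the nose), and that it identifies the twisted unit with the unit as \emph{objects of the convolution algebra}. In other words, the cancellation of the $\chi_n$-twist must be carried out equivariantly, tracking the $B$-linearization through the section given by the last coordinate of $g^{-1}(v)$, rather than after discarding the group action; this equivariant bookkeeping is the delicate point of the argument.
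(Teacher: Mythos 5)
Your proposal is correct and takes essentially the same approach as the paper: reduce to generators using the homomorphism properties (the $\sigma_i$ case being definitional), and for $\Delta_n$ cancel the $\chi_n$-twist via the nowhere-vanishing last coordinate of $g^{-1}(v)$ supplied by the framing. The "delicate" equivariant identification you flag at the end is precisely what the paper's stabilization proposition carries out: multiplication by the invertible function $S=(g_1^{-1}(v))_n$, which is $B^2$-semi-invariant of weight $\chi_n$, gives the isomorphism identifying $\forg^*(\mathds{1}_n\langle\chi_n,0\rangle)$ with $\mathds{1}_n$.
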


\subsection{Geometric trace operator}
\label{sec:geom-trace-oper}

The variety $\widetilde{\bHilb}_{1,n}^{free}$ embeds inside $\calXr_2(G_n)$ via the map $j_e:(X,Y,v)\rightarrow (X,e,Y,v)$. The diagonal copy $B=B_\Delta\hookrightarrow B^2$
respects the embedding $j_e$ and since $j_e^*(\Wr)=0$, we obtain a functor:
$$ j_e^*: \eMFn(\calXr_2(G_n),\Wr)\rightarrow \MF_{B_\Delta}(\widetilde{\bHilb}_{1,n}^{free},0).$$
Respectively, we get a geometric version of "closure of the braid" map:
$$\mathbb{L}: \eMFn(\calXr_2(G_n),\Wr)\to D^{per}_{T_{sc}}(\bHilb_{1,n}^{free}).$$
The main result of \cite{OR16} could be restated in more  geometric terms via the geometric trace map:
$$ \mathcal{T}r: \Br_n\rightarrow D^{per}_{T_{sc}}(\bHilb_{1,n}^{free}), \quad \mathcal{T}r(\beta):=\oplus_k \mathbb{L}(\Phi^{fr}(\beta))\otimes \Lambda^\bullet\mathcal{B}.$$

\newcommand{\trvsp}{\mathrm{Vect-gr}}
\newcommand{\trgrh}{\mathrm{H}^{(3)}}
\begin{theorem}\cite{OR16} The composition $\mathbb{H}\circ \mathcal{T}r: \Br_n\rightarrow D^{per}_{T_{sc}}(pt)$ categorifies the Jones-Oceanu trace and thus defines
a triply graded homology of  links.
\end{theorem}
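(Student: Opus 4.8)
The plan is to prove the statement via Markov's theorem. The braid closures $L(\beta)$ exhaust all links, and two braids (possibly on different numbers of strands) have isotopic closures exactly when they are related by a finite sequence of conjugations in $\Br_n$ (Markov move I) and positive/negative stabilizations $\Br_n\subset\Br_{n+1}$, $\beta\mapsto\beta\sigma_n^{\pm1}$ (Markov move II). Hence it suffices to establish: (a) invariance of $\mathbb{H}\circ\mathcal{T}r$ under conjugation; (b) invariance under stabilization once the writhe normalization of Theorem~\ref{thm:mainOR} is applied; and (c) that the graded Euler characteristic of $\mathbb{H}\circ\mathcal{T}r$ satisfies the HOMFLY skein relation with the standard normalization, so that it recovers the Jones--Ocneanu trace. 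Since the earlier theorems already show that $\Phi^{fr}$ is a homomorphism into the convolution algebra $(\eMFn(\calXr_{2,fr}(G_n),\Wr),\bar{\star})$, the braid relations and the dependence of $\mathcal{T}r(\beta)$ on $\beta$ only through its class as a braid are automatic; the content lies in the three items above.

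For (a), I would establish a trace property for the closure map $\mathbb{L}$, namely a natural isomorphism $\mathbb{L}(A\bar{\star}B)\cong\mathbb{L}(B\bar{\star}A)$ for $A,B\in\eMFn(\calXr_2(G_n),\Wr)$; together with the homomorphism property this yields $\mathbb{H}\circ\mathcal{T}r(\gamma\beta\gamma^{-1})\cong\mathbb{H}\circ\mathcal{T}r(\beta)$. Geometrically $\mathbb{L}$ is built from the restriction $j_e^*$ to the diagonal copy $B_\Delta\incl B^2$ followed by descent along the $B$-quotient to $\bHilb_{1,n}^{free}$, and the potential $\Wr=\Tr(X\Ad_g(Y))$ is cyclically symmetric in the two boundary $G_n$-factors of the convolution correspondence. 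The isomorphism should then follow from base change and the projection formula applied to the involution swapping the two factors, which is intertwined with the $G_n$-conjugation symmetry of $\Wr$; I expect this to be largely formal once the convolution correspondences are written out explicitly.

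The \emph{main obstacle} is (b), stabilization invariance. Here I would compute $\mathbb{L}(\Phi^{fr}(\beta\sigma_n^{\pm1}))$ for $\beta\in\Br_n\subset\Br_{n+1}$ by using the explicit Koszul description of $\Phi^{fr}(\sigma_n)$ on the enlarged space $\calXr_{2,fr}(G_{n+1})$ and integrating out the coordinates attached to the new $(n+1)$-st strand. Concretely, $\bHilb_{1,n+1}^{free}$ is an iterated tower of projective spaces over $\bHilb_{1,n}^{free}$, and the single crossing $\sigma_n^{\pm1}$ localizes the computation to the top $\PP^n$-fibre; the pushforward along this fibration of the rank-one Koszul factorization attached to $\sigma_n^{\pm1}$, twisted by the power of the tautological bundle coming from $\Lambda^\bullet\mathcal{B}$, should produce a single shifted copy of $\mathcal{T}r(\beta)$. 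The delicate part is matching the shifts: the normalization $(k+\textup{writh}(\beta)-n-1)/2$ is engineered so that under positive stabilization $\textup{writh}(\beta)-n$ is unchanged while under negative stabilization it drops by $2$, reproducing the known asymmetry of the two Markov weights of the Jones--Ocneanu trace. I would verify this by a direct Borel--Weil--Bott type computation of the relevant line-bundle cohomology on the $\PP^n$-fibre, checking that the derived structure imposed by $[X,Y]=0$ does not interfere with the vanishing, which it should not since localization to the top fibre only sees the smooth ambient tower.

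Finally, for (c) I would produce an exact triangle in $\eMFn(\calXr_{2,fr}(G_n),\Wr)$ relating $\Phi^{fr}(\sigma_i)$, the identity object $\Phi^{fr}(1)$, and the oriented resolution, arising from the two-term Koszul presentation of $\Phi^{fr}(\sigma_i)$ along the $i$-th simple-root direction. Applying the exact functor $\mathbb{H}\circ\mathbb{L}\circ(-\otimes\Lambda^\bullet\mathcal{B})$ and passing to graded Euler characteristics, weighted by the two $T_{sc}$-weights (which I identify with the HOMFLY variables $a$ and $q$) and by the $\Lambda^k$-degree, turns this triangle into the skein relation. Combined with the normalization on the unknot computed directly from $\bHilb_{1,1}^{free}=\pt$, this pins the Euler characteristic down to the Jones--Ocneanu trace, and together with (a) and (b) shows that $\mathbb{H}\circ\mathcal{T}r$ is a well-defined triply graded link homology categorifying it, the three gradings being the two $T_{sc}$-weights together with the combined homological and exterior degree recorded by $k$.
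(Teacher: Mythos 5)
You should first note that the paper in front of you does not actually prove this theorem: it is imported wholesale from \cite{OR16} and used here as a black box (the new content of this note concerns JM elements and the affine braid group). Measured against the proof in \cite{OR16}, your plan is essentially the same strategy: well-definedness on braids comes for free from the homomorphism property of $\Phi^{fr}$, and invariance of $\mathbb{H}\circ\mathcal{T}r$ under isotopy of closures is established through the two Markov moves --- conjugation via a cyclicity (trace) property of the closure functor $\mathbb{L}$ applied to the convolution product, and stabilization via an explicit computation on the added strand, using the projective-tower structure of $\bHilb^{free}_{1,n+1}$ over $\bHilb^{free}_{1,n}$ and the writhe-dependent normalization of Theorem~\ref{thm:mainOR} to absorb the asymmetry between positive and negative stabilization --- followed by a decategorified identification with the Jones--Ocneanu trace. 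Your items (a), (b), (c) are a faithful reconstruction of that outline, including the correct observation that $\mathrm{writh}(\beta)-n$ is preserved by positive and dropped by $2$ under negative stabilization.

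There is, however, one concrete slip you should fix in (c): the grading dictionary. In this theory the HOMFLY variable $a$ is \emph{not} one of the $T_{sc}$-weights; it is carried by the exterior degree $k$ in $\Lambda^k\mathcal{B}$ --- this is precisely why the invariant is the collection $\mathbb{H}^k(\mathbb{S}_\beta)=\mathbb{H}(\mathbb{S}_\beta\otimes\Lambda^k\mathcal{B})$ with $k$ entering the normalization $(k+\mathrm{writh}(\beta)-n-1)/2$ --- while the two $T_{sc}$-weights supply $q$ and the third, purely categorical grading $t$. As literally written, an Euler characteristic weighted by the two $T_{sc}$-weights declared to be $(a,q)$ will not satisfy the HOMFLY skein relation $a\chi_+-a^{-1}\chi_-=(q-q^{-1})\chi_0$, because the $a$-powers enter through the $\Lambda^\bullet\mathcal{B}$-degree together with the writhe and strand-number shifts, not through the torus action. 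This is a bookkeeping error rather than a structural one: once you set $a\leftrightarrow k$ and $(q,t)\leftrightarrow T_{sc}$, your exact-triangle argument relating $\calC_+$, the unit $\bar{\mathds{1}}$, and the third Koszul object, together with the unknot normalization on $\bHilb^{free}_{1,1}=\pt$, pins the Euler characteristic down to the Jones--Ocneanu trace as in \cite{OR16}.
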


Theorem~\ref{thm:mainOR} now follows from the theorems in this section. Indeed, let \(\Delta=\prod_i \Delta_i^{k_i}\) and \(\delta=\prod_i\delta_i^{k_i}\) then we have 
$$ \mathbb{L}\circ\Phi^{fr}(\beta\cdot\delta)=\mathbb{L}\circ \Phi^{fr}\circ\mathfrak{fgt}(\beta\cdot\Delta)=\mathbb{L}\circ \mathrm{fgt}^*\circ \Phi^{aff}(\beta\cdot\Delta)=\mathbb{L}(\mathrm{fgt}^*\circ\Phi^{aff}(\beta))\otimes \mathcal{L}_\delta.$$

To summarize, we constructed the following commutative diagram:
\begin{equation}\label{eq:main-dia}
\begin{tikzcd}[row sep=2cm]
\Br_n^{aff}
\arrow[r, "\mathfrak{fgt}"]
\arrow[d,"\Phi^{aff}"]
&
\Br_n
\arrow[r,"\bcls"]
\arrow[d,"\Phi^{fr}"]
&
\Lks
\arrow[d,"\trgrh"]
\\
(\eMFn(\calXr_2(G_n),\Wr),\bar{\star})
\arrow[r,"\mathrm{fgt}^*"]
&
(\MF_{B^2_n}(\calXr_{2,fr}(G_n),\Wr),\bar{\star})
\arrow[r,"\mathbb{H}\circ \mathcal{T}r"]
&
\trvsp
\end{tikzcd}
\end{equation}

Here $\mathfrak{L}$ is the set of (isotopy classes of) oriented links in a 3-sphere, $\bcls$ is the closure of a braid and $\trgrh$ is the triply graded link homology defined in\cite{OR16}.

The left commutative diagram has two important generalizations. The first generalization uses the concatenation homomorphism  \(\mathfrak{cnt}:\Br_n^{aff}\times\Br_m^{aff}
\rightarrow \Br_{n+m}^{aff}\) which is geometrically an insertion of the affine braid element on \(m\) strands in place of the flag pole of the $n$-strand braid:
\[
\begin{tikzcd}[row sep=2cm]
\Br_n^{aff}\times\Br_m^{aff}
\arrow[r, "\mathfrak{cnt}"]
\arrow[d,"\Phi^{aff}\times\Phi^{aff}"]
&
\Br_{n+m}^{aff}
\arrow[d,"\Phi^{aff}"]
&
\\
(\eMFn(\calXr_2(G_n),\Wr),\bar{\star})\times(\MF_{B_m^2}(\calXr_2(G_m),\Wr),\bar{\star})
\arrow[r,"\overline{\ind}_n"]
&
(\MF_{B_{n+m}^2}(\calXr_{2}(G_{n+m}),\Wr),\bar{\star}),
\end{tikzcd}
\]
here \(\overline{\ind}_n\) is the induction functor described in the section \ref{sec:induction-functors}. The second generalization uses the concatenation map
\(\mathfrak{cnt}:\Br_n^{aff}\times\Br_m\rightarrow\Br_{n+m}\) which is an insertion of an ordinary braid on \(m\) strands in place of the flag pole of the affine braid:
\[
\begin{tikzcd}[row sep=2cm]
\Br_n^{aff}\times\Br_m
\arrow[r, "\mathfrak{cnt}"]
\arrow[d,"\Phi^{aff}\times\Phi^{fr}"]
&
\Br_{n+m}
\arrow[d,"\Phi^{fr}"]
&
\\
(\eMFn(\calXr_2(G_n),\Wr),\bar{\star})\times(\MF_{B_m^2}(\calXr_{2,fr}(G_{m}),\Wr),\bar{\star})
\arrow[r,"\overline{\ind}_n"]
&
(\MF_{B^2_{n+m}}(\calXr_{2,fr}(G_{n+m}),\Wr),\bar{\star}),
\end{tikzcd}
\]
here \(\overline{\ind}_n\) is the functor from the section \ref{sec:induction-functors}.
In particular, the left square of  our main diagram~\ref{eq:main-dia} is the last diagram with \(m=0\).

We expect that both diagrams will play an important role in further  extension of the theory from \cite{OR16} to the case of the colored link homology and to the proof of the
corresponding cabling formula which is a focus of our current research.




The rest of the paper consists of two sections. In section~\ref{sec:convolution-algebras} we remind the main steps of the construction of the convolution algebras
on the category of equivariant matrix factorizations of the space \(\calXr_2=\frb\times G_n\times\frn\) and its bigger version which we call
`non-reduced space'. We need this section for the proofs of our main result but  this section also
could be useful for the reader who is interested in the results of \cite{OR16} but not interested in the details  of the proofs.
In the  section~\ref{sec:geom-real-affine} we explain the construction of the homomorphism from \cite{OR16} and explain how it extends to
the case of the affine braid groups. We also prove our main result about the forgetful pull-back functor.

\subsection{Acknowledgements}
\label{sec:acknowledgements}
We would like to thank Roman Bezrukavnikov,
Eugene Gorsky, Andrei Negu{\c t}, Jake Rasmussen for useful discussions. L.R. is especially thankful to Dmitry Arinkin for illuminating discussions.
A.O. Is especially thankful to Andrei Negu{\c t} for illuminating discussions. Both authors are very thankful to an anonymous referee who made many very valuable suggestions
that helped to improve the text.
Work of A.O. was partially supported by NSF CAREER grant DMS-1352398. The work of L.R. is supported by the NSF grant DMS-1108727.

\section{Convolution algebras}
\label{sec:convolution-algebras}

In this section we define convolution algebras on the categories of matrix factorizations on several auxiliary spaces. First we discuss the spaces and maps between them.
The main space used for our constructions of the convolution algebras is the space
$$\calX_\ell(G_n)=\frg\times \left( G_n\times\frn_n\right)^\ell.$$
It has a natural $G_n\times B_n^\ell$-action
$$ (b_1,\dots,b_\ell)\cdot (X,g_1,Y_1,\dots,g_\ell,Y_\ell)=(X,g_1\cdot b_1^{-1},\Ad_{b_1}(Y_1),\dots,g_\ell\cdot b_\ell^{-1},\Ad_{b_\ell}(Y_\ell)),$$
\[ h\cdot(X,g_1,Y_1,\dots,g_\ell,Y_\ell)=(\Ad_h(X),h\cdot g_1,Y_1,\dots, h\cdot g_\ell,Y_\ell)\]

The space is $\calX_2$ is particularly important. The central object of our study is the matrix factorizations on this space with the potential:
$$ W(X,g_1,Y_1,g_2,Y_2)=\Tr(X(\Ad_{g_1}(Y_1)-\Ad_{g_2}(Y_2))).$$
Below we briefly discuss the categories of matrix factorizations and their equivariant analogues.

\subsection{Matrix Factorizations}
\label{sec:matr-fact}
Matrix factorizations were introduced by  Eisenbud \cite{E80} and later
the subject was further developed by Orlov \cite{O04}, one can also
consult \cite{D11} for an overview. Below we present only the basic definitions and do not present any proofs.

Let us remind that for an affine variety $\calZ$ and a function $F\in \CC[\calZ]$ there exists a triangulated category $\MFs(\calZ,F)$. The objects of the category are pairs
\[ \mathcal{F}=(M_0\oplus M_1 ,D),\quad D: M_i\rightarrow M_{i+1},\quad D^2=F,\]
where \(M_i\) are free \(\CC[\calZ]\)-modules of finite rank and \(D\) is a homomorphism of  \(\CC[\calZ]\)-modules.

Given \(\mathcal{F}=(M,D)\) and \(\mathcal{G}=(N,D')\) the linear space of morphisms \(\Hom(\mathcal{F},\mathcal{G})\) consists of  homomorphisms of
\(\CC[\calZ]\)-modules
\(\phi=\phi_0\oplus \phi_1\), \(\phi_i\in\Hom(M_i,N_i) \) such that \(\phi\circ D=D'\circ \phi\).
Two morphisms \(\phi,\rho\in \Hom(\mathcal{F},\mathcal{G})\) are homotopic if there is homomorphism of \(\CC[\calZ]\)-modules \(h=h_0\oplus h_1 \),
\(h_i\in \Hom(M_i,N_{i+1})\) such that \(\phi-\rho=D'\circ h-h\circ D\).

In the paper \cite{OR16} we introduced a notion of  equivariant matrix factorizations which we explain below.
First let us remind the construction of the Chevalley-Eilenberg complex.

\subsection{Chevalley-Eilenberg complex}
\label{sec:chev-eilenb-compl}

Suppose that $\frh$ is a Lie algebra. Chevalley-Eilenberg complex
 $\CE_\frh$ is the complex $(V_\bullet(\frh),d)$ with $V_p(\frh)=U(\frh)\otimes_\CC\Lambda^p \frh$ and differential $d_{ce}=d_1+d_2$ where:
 \def\dtheta{d}
 $$ d_1(u\otimes x_1\wedge\dots \wedge x_p)=\sum_{i=1}^p (-1)^{i+1} ux_i\otimes x_1\wedge\dots \wedge \hat{x}_i\wedge\dots\wedge x_p,$$
 $$ d_2(u\otimes x_1\wedge\dots \wedge x_p)=\sum_{i<j} (-1)^{i+j} u\otimes [x_i,x_j]\wedge x_1\wedge\dots \wedge \hat{x}_i\wedge\dots\wedge \hat{x}_j\wedge\dots \wedge x_p,$$

 Let us denote by $\Delta$ the standard map $\frh\to \frh\otimes \frh$ defined by $x\mapsto x\otimes 1+1\otimes x$.
 Suppose $V$ and $W$ are modules over the Lie algebra $\frh$ then we use notation
 $V\odel W$ for  the $\frh$-module which is isomorphic to $V\otimes W$ as a vector space, the $\frh$-module structure being defined by  $\Delta$. Respectively, for a given $\frh$-equivariant matrix factorization $\calF=(M,D)$ we denote by $\CE_{\frh}\odel \calF$
 the $\frh$-equivariant matrix factorization $(CE_\frh\odel\calF, D+d_{ce})$. The $\frh$-equivariant structure on $\CE_{\frh}\odel \calF$ originates from the
 left action of $U(\frh)$ that commutes with right action on $U(\frh)$ used in the construction of $\CE_\frh$.

 A slight modification of the standard fact that $\CE_\frh$ is the resolution of the trivial module implies that \(\CE_\frh\stackon{$\otimes$}{$\scriptstyle\Delta$} M\) is a free resolution of the
$\frh$-module $M$.

\subsection{Equivariant matrix factorizations}
\label{sec:equiv-matr-fact}

Let us assume that there is an action of the Lie algebra \(\frh\) on \(\calZ\) and \(F\) is a \(\frh\)-invariant function.
Then we can construct the following triangulated category \(\MFs_{\frh}(\calZ,W)\).

The objects of the category are  triples:
\[\mathcal{F}=(M,D,\partial),\quad (M,D)\in\MFs(\calZ,W) \]
where $M=M^0\oplus M^1$ and $M^i=\CC[\calZ]\otimes V^i$, $V^i \in \Mod_{\frh}$,
$\partial\in \oplus_{i>j} \Hom_{\CC[\calZ]}(\Lambda^i\frh\otimes M, \Lambda^j\frh\otimes M)$ and $D$ is an odd endomorphism
$D\in \Hom_{\CC[\calZ]}(M,M)$ such that
$$D^2=F,\quad  D_{tot}^2=F,\quad D_{tot}=D+d_{ce}+\partial,$$
where the total differential $D_{tot}$ is an endomorphism of $\CE_\frh\odel M$, that commutes with the $U(\frh)$-action.


Note that we do not impose the equivariance condition on the differential $D$ in our definition of matrix factorizations. On the other hand, if $\calF=(M,D)\in \MFs(\calZ,F)$ is a matrix factorization with
$D$ that commutes with $\frh$-action on $M$ then $(M,D,0)\in \MFs_\frh(\calZ,F)$.

There is a natural forgetful functor $\MFs_\frh(\calZ,F)\to \MFs(\calZ,F)$ that forgets about the correction differentials:
$$\calF=(M,D,\partial)\mapsto \calF^\sharp:=(M,D).$$

Given two $\frh$-equivariant matrix factorizations $\calF=(M,D,\partial)$ and $\tilde{\calF}=(\tilde{M},\tilde{D},\tilde{\partial})$ the space of morphisms $\Hom(\calF,\tilde{\calF})$ consists of
homotopy equivalence classes of elements $\Psi\in \Hom_{\CC[\calZ]}(\CE_\frh\odel M, \CE_\frh\odel \tilde{M})$ such that $\Psi\circ D_{tot}=\tilde{D}_{tot}\circ \Psi$ and $\Psi$ commutes with
$U(\frh)$-action on $\CE_\frh\odel M$. Two maps $\Psi,\Psi'\in \Hom(\calF,\tilde{\calF})$ are homotopy equivalent if
there is \[ h\in  \Hom_{\CC[\calZ]}(\CE_\frh\odel M,\CE_\frh\odel\tilde{M})\] such that $\Psi-\Psi'=\tilde{D}_{tot}\circ h- h\circ D_{tot}$ and $h$ commutes with $U(h)$-action on  $\CE_\frh\odel M$.

 Given two $\frh$-equivariant matrix factorizations $\calF=(M,D,\partial)\in \MFs_\frh(\calZ,F)$ and $\tilde{\calF}=(\tilde{M},\tilde{D},\tilde{\partial})\in \MFs_\frh(\calZ,\tilde{F})$
 we define $\calF\otimes\tilde{\calF}\in \MFs_\frh(\calZ,F+\tilde{F})$ as the equivariant matrix factorization $(M\otimes \tilde{M},D+\tilde{D},\partial+\tilde{\partial})$.

 \subsection{Push forwards, quotient by the group action}
\label{sec:push-forwards}

The technical part of \cite{OR16} is the construction of push-forwards of equivariant matrix factorizations. Here we state the main
results, the details may be found in section 3 of \cite{OR16}. We need push forwards along projections and embeddings. We also use  the
functor of taking quotient by group action for our definition of the convolution algebra.

The projection case is more elementary. Suppose \(\calZ=\mathcal{X}\times\mathcal{Y}\), both \(\calZ \) and \(\mathcal{X}\) have \(\frh\)-action and
the projection \(\pi:\mathcal{Z}\rightarrow\mathcal{X}\) is \(\frh\)-equivariant. Then
for any $\frh$ invariant element $w\in\CC[\calX]^\frh$ there is a functor
\(\pi_{*}\colon \MFs_{\frh}(\calZ, \pi^*(w))\rightarrow \MFs_{\frh}(\mathcal{X},w)
\)
which simply forgets the action of $\CC[\calY]$.


We define an embedding-related push-forward in the case when the subvariety $\calZ_0\xhookrightarrow{j}\calZ$
is the common zero of an ideal $I=(f_1,\dots,f_n)$ such that the functions $f_i\in\CC[\calZ]$ form a regular sequence. We assume that the Lie algebra $\frh$ acts on $\calZ$ and $I$ is $\frh$-invariant. Then there exists an $\frh$-equivariant Koszul complex $K(I)=(\Lambda^\bullet \CC^n\otimes \CC[\calZ],d_K)$ over $\CC[\calZ]$ which has non-trivial homology only in degree zero. Then in section~3 of \cite{OR16} we define the push-forward functor
\[
j_*\colon \MFs_{\frh}(\calZ_0,W|_{\calZ_0})\longrightarrow
\MFs_{\frh}(\calZ,W),
\]
for any $\frh$-invariant element $W\in\CC[\calZ]^\frh$.


Finally, let us discuss the quotient map. The complex \(\CE_\frh\) is a resolution of the trivial \(\frh\)-module by free modules. Thus the correct derived
version of taking \(\frh\)-invariant part of the matrix factorization \(\mathcal{F}=(M,D,\partial)\in\MFs_\frh(\calZ,W)\), \(W\in\CC[\calZ]^\frh\) is
\[\CE_\frh(\mathcal{F}):=(\CE_\frh(M),D+d_{ce}+\partial)\in\MFs(\calZ/H,W),\]
where \(\calZ/H:=\mathrm{Spec}(\CC[\calZ]^\frh )\) and use the general definition of \(\frh\)-module \(V\):
\[\CE_\frh(V):=\Hom_\frh(\CE_\frh,\CE_\frh\odel V).\]

\subsection{Convolutions and reduced spaces}
\label{sec:conv-new}
For a Borel group $B$, we treat $B$-modules as \(T\)-equivariant \(\frn=\mathrm{Lie}([B,B])\)-modules. For a space $\calZ$ with $B$-action and for $W\in\CC[\calZ]^B$ we define $\MFs_B(\calZ,W)$ as the full subcategory of $\MFs_{\frn}(\calZ,W)$ whose objects are matrix factorizations \((M, D, \partial)\), where $M$ is a \(B\)-module and the differentials $D$ and $\partial$ are \(T\)-invariant. The category
\(\MFs_{B^\ell}(\calZ,W)\) has a similar definition.


The categories that we use in \cite{OR16} are subcategories \(\MF_{B^\ell}(\calX_\ell,F)\subset \MFs_{B^\ell}(\calX_\ell,F)\) that consist of the matrix factorizations which
are equivariant with respect to the action of \(T_{sc}\) and \(G\)-invariant.

The space \(\calX_3\)  has natural projections \(\pi_{ij}\) on \(\calX_2\) onto the corresponding factors. Since \(\pi^*_{12}(W)+\pi^*_{23}(W)=\pi_{13}^*(W)\),
there is
 a well-defined binary operation on matrix factorizations \(\MF_{B^2}(\calX_2,W)\):
\begin{equation}\label{eq:conv-non-red}
  \calF\star\calG:=\pi_{13*}(\CE_{\frh^{(2)}}(\pi_{12}^*(\calF)\otimes\pi_{23}^*(\calG)))^{T^{(2)}}.
\end{equation}

This operation defines an associative product and we call the corresponding algebra {\it the convolution algebra }. For computational reasons we also introduce a smaller {\it `reduced'}
space \(\calXr_\ell:=\frb\times G^{\ell-1}\times\frn\) with the \(B^\ell\)-action:
\[(b_1,\dots,b_\ell)\cdot(X,g_1,\dots,g_{\ell-1},Y)=(\Ad_{b_1}(X),b_1g_1b_2^{-1},b_2g_2b_3^{-1},\dots,\Ad_{b_\ell}(Y)).\]
In particular the space \(\calXr_2\) has the following \(B^2\)-invariant potential:
\[\Wr(X,g,Y)=\Tr(X\Ad_g(Y)).\]

The proposition 5.1 from \cite{OR16} provides a functor:
\[\Phi:\MF_{B^2}(\calXr_2,\Wr)\rightarrow\MF_{B^2}(\calX_2,W)\]
which is an embedding of the categories. Without the \(B^2\)-equivariant structure the functor is an ordinary Kn\"orrer functor \cite{Kn}, the equivariant version of
the Kn\"orrer functor is defined as composition of the equivariant pull-back and push-forward (see section 5 of \cite{OR16}):
\[\Phi:=j^x_*\circ \pi^*_y,\]
where \(\pi_y:\widetilde{\calX}_2\rightarrow\calXr_2\), \(\widetilde{\calX}_2:=\frb\times G\times\frn\times G\times \frn\) is the projection \(\pi_y(X,g_1,Y_1,g_2,Y_2)=
(X,g_1^{-1}g_2,Y_2)\) and \(j^x\) is the natural embedding of \(\widetilde{\calX}_2\) into \(\calX_2\).

Let us also introduce a convolution algebra structure on the category of matrix factorizations \(\MF_{B^2}(\calXr_2,\Wr)\).
There are the following
maps $\bar{\pi}_{ij}:\calXr_3\to\calXr_2$:
\[\bar{\pi}_{12}(X,g_{12},g_{13},Y)=(X,g_{12},\Ad_{g_{23}}(Y)_{++}),
  \quad\bar{\pi}_{13}(X,g_{12},g_{13},Y)=(X,g_{12}g_{23},Y),\]
 \[\bar{\pi}_{23}(X,g_{12},g_{13},Y)=(\Ad_{g_{12}}^{-1}(X)_+,g_{23},Y).\]
Here and everywhere below \(X_+\) and \(X_{++}\) stand for the upper and strictly-upper triangular parts of \(X\).
The map \(\bar{\pi}_{12}\times\bar{\pi}_{23}\) is  \(B^2\)-equivariant  but not \(B^3\)-equivariant. However in section 5.4 of \cite{OR16} we show that
for any \(\mathcal{F},\mathcal{G}\in\MF_{B^2}(\calXr,\Wr)\) there is a natural element
\begin{equation}\label{eq:conv-red}
(\bar{\pi}_{12}\otimes_B\bar{\pi}_{23})^*(\mathcal{F}\boxtimes\mathcal{G})\in\MF_{B^3}(\calXr_3,\bar{\pi}_{13}^*(W)),
\end{equation}

such that  we can define the binary operation on \(\MF_{B^2}(\calXr,\Wr)\):
\[\mathcal{F}\bar{\star}\mathcal{G}:=\bar{\pi}_{13*}(\CE_{\frn^{(2)}}((\bar{\pi}_{12}\otimes_B\bar{\pi}_{23})^*(\mathcal{F}\boxtimes\mathcal{G}))^{T^{(2)}})\]
and \(\Phi\) intertwines the convolution structures:
\[\Phi(\mathcal{F})\star\Phi(\mathcal{G})=\Phi(\mathcal{F}\bar{\star}\mathcal{G}).\]

\subsection{Convolution on framed spaces}
\label{sec:conv-fram-spac}

As we mentioned in the introduction, it is natural to consider the framed version of our basic spaces.
The framed version of the non-reduced space is an open subset \(\calX_{\ell,fr}\subset\calX_\ell\times V\) defined by the stability condition:
\[ \CC \langle \Ad_{g_i}^{-1}(X),Y_i\rangle g^{-1}_i(u)=V,\quad g^{-1}_i(u)\in V^0\quad i=1,\dots,\ell-1,     \]
where \(V^0\subset V\) is a subset of vectors with a non-zero last coordinate.
Similarly, we define the framed reduced space \(\calXr_{2,fr}\subset\calXr_2\times V\) with the stability condition
\begin{equation}\label{eq:stab}
  \CC\langle X,\Ad_g(Y)\rangle u=V, \quad g^{-1}(u)\in V^0.
  \end{equation}

Let us also define \(\calXr_{3,fr}\) to be the intersection \(\bar{\pi}_{12}^{-1}(\calXr_{2,fr})\cap \bar{\pi}_{23}^{-1}(\calXr_{2,fr}) \) where
\(\bar{\pi}_{ij}\) are the maps \(\calXr_3\times V\rightarrow\calXr_2\times V\) which are just extensions of the previously discussed maps by the identity
map on \( V\). Similarly we have the natural maps \(\pi_{ij}:\calX_{3,fr}\rightarrow\calX_{2,fr}\) and
both reduced and non-reduced spaces have natural convolution algebra structure   defined by the formulas (\ref{eq:conv-non-red}) and (\ref{eq:conv-red})

We denote by $\forg$ the maps \(\calX_{2,fr}\rightarrow\calX_2\), \(\calXr_{2,fr}\rightarrow\calXr_2\) that forget the framing. Lemma 12.3
of \cite{OR16} says that the corresponding pull-back morphism is an homomorphism of the convolution algebras:
\[\forg^*(\mathcal{F}\star\mathcal{G})=\forg^*(\mathcal{F})\star\forg^*(\mathcal{G}).\]
Finally, let us mention that we can restrict the Kn\"orrer functor \(\Phi\) on the open set \(\calXr_{2,fr}\) to obtain the functor
\[\Phi: \MF_{B^2}(\calXr_{2,fr},\Wr)\rightarrow\MF_{B^2}(\calX_{2,fr},W).\]
This functor intertwines the convolution algebra structures on the reduced and non-reduced framed spaces.

\section{Geometric realization of the affine braid group}
\label{sec:geom-real-affine}

\subsection{Induction functors}
\label{sec:induction-functors}
The standard parabolic subgroup \(P_k\) has Lie algebra generated by \(\frb\) and \(E_{i+1,i}\), \(i\ne k\).
Let us define space  \(\calXr_2(P_k):=\frb\times P_k \times \frn\) and let us also use notation
\(\calXr_2(G_n)\) for \(\calXr_2\). There is a natural embedding \(\bar{i}_k:\calXr_2(P_k)\rightarrow\calXr_2\) and
a natural projection \(\bar{p}_k:\calXr_2(P_k)\rightarrow\calXr_2(G_k)\times\calXr_2(G_{n-k})\). The embedding \(\bar{i}_k\) satisfies
the conditions for existence of the push-forward and we can define the induction functor:
\[\overline{\ind}_k:=\bar{i}_{k*}\circ \bar{p}_k^*: \MF_{B_k^2}(\calXr_2(G_k),\Wr)\times\MF_{B_{n-k}^2}(\calXr_2(G_{n-k}),\Wr)\rightarrow\MF_{B_n^2}(\calXr_2(G_{n}),\Wr)\]

Similarly we define  the space  \(\calXr_{2,fr}(P_k)\subset\frb\times P_k \times \frn\times V\) as an open subset defined by the stability condition (\ref{eq:stab}).
The last space has a natural projection map \(\bar{p}_k:\calXr_{2,fr}(P_k)\rightarrow\calXr_2(G_k)\times\calXr_{2,fr}(G_{n-k})\) and
the embedding \(\bar{i}_k: \calXr_{2,fr}(P_k)\rightarrow\calXr_{2,fr}(G_{n})\) and we can define the induction functor:
\[\overline{\ind}_k:=\bar{i}_{k*}\circ \bar{p}_k^*: \MF_{B_k^2}(\calXr_2(G_k),\Wr)\times\MF_{B_{n-k}^2}(\calXr_{2,fr}(G_{n-k}),\Wr)\rightarrow\MF_{B_n^2}(\calXr_{2,fr}(G_{n}),\Wr)\]

It is shown in section 6 (proposition 6.2) of \cite{OR16} that the functor \(\overline{\ind_k}\) is the homomorphism of the convolution algebras:
\[\overline{\ind}_k(\mathcal{F}_1\boxtimes\mathcal{F}_2)\bar{\star} \overline{\ind}_k(\mathcal{G}_1\boxtimes\mathcal{G}_2)=
  \overline{\ind}_k(\mathcal{F}_1\bar{\star}\mathcal{G}_2\boxtimes\mathcal{F}_2\bar{\star}\mathcal{G}_2).\]
To define the non-reduced version of the induction functors one needs to introduce the space \(\calX^\circ_2(G_n)=\frg\times G_n\times\frn\times\frn\) which is a
slice to the \(G_n\)-action on the space \(\calX_2\). In particular, the potential \(W\) on this slice becomes:
\[W(X,g,Y_1,Y_2)=\Tr(X(Y_1-\Ad_g(Y_2))).\]
Similarly to the case of the reduced space, one can define the  space \(\calX^\circ_2(P_k):=\frg\times P_k\times\frn\times\frn\)   and the corresponding
maps \(i_k:\calX^\circ_2(P_k)\rightarrow\calX^\circ(G_n)\), \(p_k:\calX^\circ_2(P_k)\rightarrow\calX^\circ_2(G_k)\times\calX^\circ_2(G_{n-k})\). Thus we get a version of the induction functor for non-reduced spaces:
\[{\ind}_k:=i_{k*}\circ p_k^*: \MF_{B_k^2}(\calX_2(G_k),W)\times\MF_{B_{n-k}^2}(\calX_2(G_{n-k}),W)\rightarrow\MF_{B_n^2}(\calX_2(G_{n}),W)\]

It is shown in proposition 6.1 of \cite{OR16} that the Kn\"orrer functor is compatible with the induction functor:
\[\ind_k\circ(\Phi_k\times\Phi_{n-k})=\Phi_n\circ\ind_k.\]

\subsection{Generators of the finite braid group action}
\label{sec:gener-braid-group}
 Let us define \(B^2\)-equivariant embedding \(i: \calXr_2(B_n)\rightarrow\calXr_2\), \(\calXr_2(B):=\frb\times B\times \frn\).
The pull-back of \(\Wr\) along the map \(i\) vanishes and the embedding \(i\) satisfies the conditions for existence of the push-forward
\(i_*:\MF_{B^2}(\calXr_2(B_n),0)\rightarrow \MF_{B^2}(\calXr_2(G_n),\Wr)\). We denote by \(\underline{\CC[\calXr_2(B_n)]}\in\MF_{B^2}(\calXr_2(B_n),0)\) the
matrix factorization with zero differential that is homologically non-trivial only in even homological degree. As it is shown in proposition 7.1 of \cite{OR16} the
push-forward
\[\bar{\mathds{1}}_n:=i_{*}(\underline{\CC[\calXr_2(B_n)]})\]
is the unit in the convolution algebra. Similarly, \(\mathds{1}_n:=\Phi(\bar{\mathds{1}}_n)\) is also a unit in non-reduced case.

Let us first discuss the case of the braids on two strands. The key to construction of the braid group action in \cite{OR16} is the following factorization in the case
\(n=2\):
$$\Wr(X,g,Y)=y_{12}(2g_{11}x_{11}+g_{21}x_{12})g_{21}/\det,$$
where \(\det=\det(g)\) and
$$ g=\begin{bmatrix} g_{11}&g_{12}\\ g_{21}& g_{22}\end{bmatrix},\quad X=\begin{bmatrix} x_{11}&x_{12}\\ 0& x_{22}\end{bmatrix},\quad Y=\begin{bmatrix} 0& y_{12}\\0&0\end{bmatrix}$$
Thus we can define the following strongly equivariant Koszul matrix factorization:
\[\bar{\mathcal{C}}_+:=(\CC[\calXr_2]\otimes \Lambda\langle\theta\rangle,D,0,0)\in\MF_{B^2}(\calXr_2,\Wr),\]
\[  \quad D=\frac{g_{12}y_{12}}{\det}\theta+\left[g_{11}(x_{11}-x_{22})+g_{21}x_{12}\right]\frac{\partial}{\partial\theta},\]
where \(\Lambda\langle\theta\rangle\)   is the exterior algebra with one generator.

This matrix factorization corresponds to the positive elementary braid on two strands.

Using the induction functor we can extend the previous definition on the case of the arbitrary number of strands. For that we introduce an insertion
functor:
\[\overline{\Ind}_{k,k+1}:\MF_{B_2^2}(\calXr_2(G_2),\Wr)\rightarrow\MF_{B_n^2}(\calXr_2(G_n),\Wr)\]
\[\overline{\Ind}_{k,k+1}(\calF):=\overline{\ind}_{k+1}(\overline{\ind}_{k-1}(\bar{\mathds{1}}_{k-1}\times \calF)\times\bar{\mathds{1}}_{n-k-1}),\]
and similarly we define non-reduced insertion functor \[\Ind_{k,k+1}:\MF_{B_2^2}(\calX_2(G_2),W)\rightarrow\MF_{B_n^2}(\calX_2(G_n),W).\]
Thus we define the generators of the braid group as follows:
\[\bar{\calC}_+^{(k)}:=\overline{\Ind}_{k,k+1}(\bar{\calC}_+),\quad \calC_+^{(k)}:=\Ind_{k,k+1}( \calC_+).\]

The section 11 of \cite{OR16} is devoted to the proof of the braid relations between these elements:
\[\bar{\calC}^{(k+1)}_+\bar{\star}\bar{\calC}^{(k)}_+\bar{\star}\bar{\calC}^{(k+1)}_+=\bar{\calC}^{(k)}_+\bar{\star}\bar{\calC}_+^{(k+1)}
  \bar{\star}\bar{\calC}_+^{(k)},\]
\[\calC^{(k+1)}_+\star\calC^{(k)}_+\star\calC^{(k+1)}_+=\calC^{(k)}_+\star\calC_+^{(k+1)}
  \star\calC_+^{(k)}.\]

Let us now discuss the inversion of the elementary braid. In view of inductive definition of the braid group action, it is sufficient
to understand the inversion in the case \(n=2\).

Thus we define:
\[\calC_-:=\calC_+\langle-\chi_1,\chi_2\rangle\in\MF_{B^2}(\calX_2(G_2),W),\]
and the definition of \(\bar{\calC}_-\) is similar. As we will see below, the definition of \(\calC_-\) is actually symmetric with
respect to the left-right twisting: \(\calC_-= \calC_+\langle\chi_2,-\chi_1\rangle\).


\begin{theorem}
  We have:
  \begin{equation}
    \label{eq:1}
    \calC_+\star\calC_-=\mathds{1}_2.
  \end{equation}
\end{theorem}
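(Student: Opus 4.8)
The plan is to push the whole identity down to the \emph{reduced} space, where both generators are genuine one-variable Koszul matrix factorizations, and then to perform an explicit contraction. Recall from Section~\ref{sec:conv-new} that the Kn\"orrer functor $\Phi\colon\MF_{B^2}(\calXr_2,\Wr)\to\MF_{B^2}(\calX_2,W)$ is an embedding of categories that intertwines the two convolution products, $\Phi(\calF)\star\Phi(\calG)=\Phi(\calF\,\bar{\star}\,\calG)$, and that sends $\bar{\mathds{1}}_2$ to $\mathds{1}_2$; since twisting by characters commutes with $\Phi$ and the non-reduced generators are defined as the Kn\"orrer images of the reduced ones, it also sends $\bar{\calC}_\pm$ to $\calC_\pm$. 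Hence it suffices to prove the reduced identity
\[
\bar{\calC}_+\,\bar{\star}\,\bar{\calC}_-\;\cong\;\bar{\mathds{1}}_2\quad\text{in}\quad\MF_{B^2}(\calXr_2,\Wr),
\]
and then apply $\Phi$. I would carry the computation out in this reduced form.

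Next I would unwind the convolution on $\calXr_3$. Writing $\bar{\calC}_+=(\CC[\calXr_2]\otimes\Lambda\langle\theta\rangle,\,a\theta+b\,\partial_\theta)$ with $ab=\Wr$ (so $a$ is the $y$-linear factor and $b$ the $x$-linear factor of the explicit differential above), and coordinatizing $\calXr_3$ by $(X,g,h,Y)$ so that $\bar{\pi}_{13}$ records the product $gh$, the two pullbacks are obtained by the substitutions of Section~\ref{sec:conv-new}: $\bar{\pi}_{12}^*\bar{\calC}_+$ replaces $Y$ by $\Ad_h(Y)_{++}$, while $\bar{\pi}_{23}^*\bar{\calC}_-$ replaces $X$ by $\Ad_g^{-1}(X)_+$ and uses the twisted differential of $\bar{\calC}_-=\bar{\calC}_+\langle-\chi_1,\chi_2\rangle$. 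Tensoring gives a two-generator Koszul factorization on $\calXr_3$ in odd variables $\theta_1,\theta_2$, whose square is $\bar{\pi}_{13}^*\Wr$. The point of the twist $\langle-\chi_1,\chi_2\rangle$ is that it lines up the $T$-weights of the $\theta_2$-factors with those of the $\theta_1$-factors, which is exactly what will permit the elimination below.

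I would then take the middle-Borel reduction $\CE_{\frn^{(2)}}(\,\cdot\,)^{T^{(2)}}$ and push forward along $\bar{\pi}_{13*}$. Because $\CE_{\frn^{(2)}}$ is a resolution of the trivial module, this step reorganizes the complex without introducing spurious homology, and the pushforward simply forgets the $\CC[h]$-action once the middle $B$ has been quotiented. The heart of the matter is a Gaussian elimination on the resulting two-generator factorization: the explicit form $\bar{\pi}_{23}(X,g,h,Y)=(\Ad_g^{-1}(X)_+,h,Y)$, together with the $\det$-denominators and the change of the $y$-coordinate under $\Ad_h$, makes one of the two Koszul entries a unit after reduction, so that the corresponding rank-one acyclic summand cancels by an explicit contracting homotopy. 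What survives is a single-generator Koszul factorization of the product decomposition $\Wr=g_{21}\cdot(\Wr/g_{21})$, and since $\bar{\mathds{1}}_2=i_*(\underline{\CC[\calXr_2(B)]})$ is precisely the Koszul factorization resolving the locus $\{g_{21}=0\}$ with complementary factor $\Wr/g_{21}$, this is exactly $\bar{\mathds{1}}_2$.

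The hard part will be the equivariant bookkeeping around this cancellation. I must verify that the unit entry used in the elimination is genuinely $T^{(2)}$-invariant, so that the contracting homotopy lives in the equivariant category $\MF_{B^2}$ rather than merely in $\MFs$, and that after elimination the residual $T$-weights reproduce those of $\bar{\mathds{1}}_2$ on the nose rather than a nontrivial character twist of it --- this is where the specific choice $\langle-\chi_1,\chi_2\rangle$ (equivalently, by the left-right symmetry $\calC_-=\calC_+\langle\chi_2,-\chi_1\rangle$ recorded above, the symmetric choice) is forced. A secondary technical point is to confirm that the CE-differential for the middle nilpotent is compatible with the contraction, i.e.\ that the truncation $(-)_+$ in $\bar{\pi}_{23}$ does not obstruct the chosen entry from being invertible on the relevant locus; once this is checked the homotopy is explicit and the identity follows. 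The same computation with the two factors exchanged, using the symmetry just mentioned, then also yields $\calC_-\star\calC_+=\mathds{1}_2$.
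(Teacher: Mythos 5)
Your opening reduction is fine: since $\Phi$ is an embedding intertwining $\star$ and $\bar\star$, sends $\bar{\mathds{1}}_2$ to $\mathds{1}_2$ and (because character twists commute with the equivariant pull--push defining $\Phi$) sends $\bar\calC_\pm$ to $\calC_\pm$, it indeed suffices to prove $\bar\calC_+\bar\star\bar\calC_-\cong\bar{\mathds{1}}_2$. Note for context that the paper itself contains no computation at all here: its proof is a citation of Section 9 of \cite{OR16} (the $SL$ case) plus the remark that the argument transfers to $GL$. The genuine gap is your central step, the ``Gaussian elimination''. The two-generator Koszul factorization you form on $\calXr_3$ has $\theta$-entries $a_1=g_{21}(\Ad_h Y)_{12}/\det_g$ and $a_2=h_{21}y_{12}/\det_h$, and $\partial_\theta$-entries $b_1=g_{11}(x_{11}-x_{22})+g_{21}x_{12}$ and $b_2$ built from $\Ad_g^{-1}(X)_+$. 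All four entries vanish identically on the locus $\{X=0,\,Y=0\}$, so the full differential $D$ vanishes at every point of that locus; consequently, for any invertible base change $P,Q$ the matrix $PDQ$ also vanishes there, and no entry can ever become a unit of $\CC[\calXr_3]$. Splitting off a contractible rank-one summand requires exactly such a unit entry, so no such summand exists: the rank-four object on $\calXr_3$ admits no contraction down to the rank-two object $\bar{\mathds{1}}_2$ before the middle reduction is taken. The same vanishing (now on $\{g_{21}=h_{21}=0\}$, which lies over the expected support $\{(gh)_{21}=0\}$) also rules out exclusion-of-variables reductions: every candidate entry degenerates precisely over the locus where the answer is supported.

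What this shows is that you have trivialized the step where the whole content of the statement lives. The functor $\CE_{\frn^{(2)}}(\cdot)^{T^{(2)}}$ is not a harmless ``reorganization without spurious homology'': it computes derived $\frn^{(2)}$-invariants, i.e.\ Lie algebra cohomology, which for $n=2$ is exactly a pushforward along the $G/B\cong\PP^1$-fibration hidden in $\bar\pi_{13}$ (after quotienting by the middle $B$, the pairs $(g,h)$ with fixed product $gh$ sweep out a $\PP^1$). Only this fiberwise cohomology kills the excess rank and produces an object supported on $\{(gh)_{21}=0\}$, i.e.\ the Koszul factorization $i_*(\underline{\CC[\calXr_2(B_2)]})=\bar{\mathds{1}}_2$; and the weight bookkeeping that forces the twist $\langle-\chi_1,\chi_2\rangle$ takes place inside the $T^{(2)}$-invariants, a stage your outline never reaches (if instead you mean to eliminate \emph{after} the CE step, the object is no longer a two-generator Koszul factorization over a polynomial ring, so the elimination as described does not typecheck). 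A corrected version of your plan must engage with this fibration --- for instance by analyzing the factorization on the two Bruhat-type charts $\{h_{21}\neq0\}$ and $\{h_{22}\neq0\}$ of the fiber, where different entries become invertible only after further justified localizations, and then gluing --- which is essentially the multi-step computation of Section 9 of \cite{OR16} that the paper invokes and that your sketch compresses into a single unsupported sentence.
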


Proof of this relation in the case of \(SL\) spaces in given in the section 9 of \cite{OR16}. The same proof works for \(GL\)-case considered in this paper.

\subsection{Generators of the affine braid group action}
\label{sec:gener-affine-braid}
The new generators that we would need to construct the action of the affine braid group are of the form
\(\mathds{1}_n\langle\alpha,\beta\rangle\). The proposition 9.1 of \cite{OR16} states  that only the sum of the weights \(\alpha+\beta\)
matters. More precisely, we have the following homotopy
\[\mathds{1}_n\langle\alpha,\beta\rangle\sim \mathds{1}_n\langle\alpha+\gamma,\beta-\gamma\rangle.\]
Also note that the element \(\mathds{1}_n\langle\sum_{i=1}^n\chi_i,0\rangle\) is a central element of the convolution algebra and
elements \(\mathds{1}_n\langle\chi_i,0\rangle\), \(i=1,\dots,n\) generate  a commutative subalgebra of the convolution algebra. In particular, in the case
\(n=2\) we have:
\[\calC_+\langle-\chi_1,\chi_2\rangle=\mathds{1}\langle\chi_1+\chi_2,0\rangle
  \star\calC_+\langle-\chi_1,\chi_2\rangle\star\mathds{1}\langle-\chi_1-\chi_2,0\rangle=
  \calC_+\langle\chi_2,-\chi_1\rangle\]

\begin{theorem}
  The assignment
  \[\sigma_i^{\pm 1}\mapsto \calC_{\pm}^{(i)},\quad \Delta_i\mapsto \mathds{1}_n\langle\chi_i,0\rangle\]
  extends to the algebra  homomorphism \(\Phi^{aff}:\Br^{aff}_n\rightarrow \MF_{B^2}(\calX,W).\)
\end{theorem}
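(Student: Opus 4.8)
The plan is to check that the images $\calC_+^{(i)}$ of $\sigma_i$ and $\mathds{1}_n\langle\chi_n,0\rangle$ of $\Delta_n$ satisfy the four families of defining relations of $\Br_n^{aff}$. The two families internal to the $\sigma_i$ — the finite braid relations $\sigma_i\sigma_{i+1}\sigma_i=\sigma_{i+1}\sigma_i\sigma_{i+1}$ and the far commutations $\sigma_i\sigma_j=\sigma_j\sigma_i$ for $|i-j|>1$ — are exactly the relations proved in \cite{OR16}, and the invertibility needed to send $\sigma_i^{-1}\mapsto\calC_-^{(i)}$ consistently follows from $\calC_+\star\calC_-=\mathds{1}_2$ together with the fact that $\Ind$ is a homomorphism of convolution algebras. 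Thus the genuinely new content is the pair of relations involving $\Delta_n$: the commutation $\sigma_i\Delta_n=\Delta_n\sigma_i$ for $i<n-1$, and the mixed relation $\sigma_{n-1}\Delta_n\sigma_{n-1}\Delta_n=\Delta_n\sigma_{n-1}\Delta_n\sigma_{n-1}$. Writing $\Theta_\chi:=\mathds{1}_n\langle\chi,0\rangle$, everything will rest on understanding how a character twist is transported through the product $\star$.

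First I would record three transport rules, all immediate from the definition \eqref{eq:conv-non-red} of $\star$ and the homotopy $\mathds{1}_n\langle\alpha,\beta\rangle\sim\mathds{1}_n\langle\alpha+\gamma,\beta-\gamma\rangle$. A left twist of the left factor and a right twist of the right factor sit on the two outer copies of $B$, which are untouched by the Chevalley--Eilenberg reduction over the middle copy, so they pull straight out: $\calF\langle\xi,0\rangle\star\calG=(\calF\star\calG)\langle\xi,0\rangle$ and $\calF\star\calG\langle 0,\tau\rangle=(\calF\star\calG)\langle 0,\tau\rangle$. A twist on the \emph{middle} copy may be carried by either factor, giving $\calF\langle 0,\tau\rangle\star\calG=\calF\star\calG\langle\tau,0\rangle$, since both sides produce the identical object $\pi_{12}^*\calF\otimes\pi_{23}^*\calG$ tensored by $\CC_\tau$ on the middle $B$ before reduction. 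Combined with the unit property of $\mathds{1}_n$ these yield the clean formulas $\calF\star\Theta_\chi=\calF\langle 0,\chi\rangle$ and $\Theta_\chi\star\calF=\calF\langle\chi,0\rangle$.

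With these in hand the commutation $\calC_+^{(i)}\star\Theta_{\chi_n}=\Theta_{\chi_n}\star\calC_+^{(i)}$ for $i<n-1$ becomes, via the transport formulas, the identity $\calC_+^{(i)}\langle 0,\chi_n\rangle=\calC_+^{(i)}\langle\chi_n,0\rangle$, equivalently $\calC_+^{(i)}\langle-\chi_n,\chi_n\rangle=\calC_+^{(i)}$. Because $\calC_+^{(i)}=\Ind_{i,i+1}(\calC_+)$ is the insertion of the two-strand generator away from the $n$-th strand, the weight $\chi_n$ meets only a spectator block carrying the unit, on which a twist of total weight $-\chi_n+\chi_n=0$ is homotopically trivial by the same homotopy; this gives the relation. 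For the mixed relation I would prove the key computation
\[
\calC_+\star\Theta_{\chi_2}\star\calC_+=\calC_+\langle 0,\chi_2\rangle\star\calC_+=\calC_+\star\calC_+\langle\chi_2,0\rangle=\calC_+\star\calC_-\langle 0,\chi_1\rangle=(\calC_+\star\calC_-)\langle 0,\chi_1\rangle=\Theta_{\chi_1},
\]
using the transport rules, the symmetric presentation $\calC_-=\calC_+\langle\chi_2,-\chi_1\rangle$, and $\calC_+\star\calC_-=\mathds{1}_2$. Inserting spectator units and iterating gives $\calC_+^{(i)}\star\Theta_{\chi_{i+1}}\star\calC_+^{(i)}=\Theta_{\chi_i}$, so in particular $\Phi^{aff}(\Delta_{n-1})=\calC_+^{(n-1)}\star\Theta_{\chi_n}\star\calC_+^{(n-1)}=\Theta_{\chi_{n-1}}$. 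Since the mixed relation is precisely the assertion that $\Delta_{n-1}=\sigma_{n-1}\Delta_n\sigma_{n-1}$ commutes with $\Delta_n$, it now follows from the commutativity of the subalgebra generated by the $\Theta_{\chi_i}$.

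The main obstacle is the rigorous justification of the middle-twist rule and its compatibility with the induction functor: one must verify that the Chevalley--Eilenberg reduction over the central copy of $B$ genuinely treats a character placed on it symmetrically from the two sides, and that inside $\Ind_{i,i+1}$ the weight $\chi_n$ restricts to the spectator Levi block exactly as the bookkeeping requires so that the homotopy triviality applies. Once these transport statements are pinned down, the remaining verifications are the short manipulations displayed above.
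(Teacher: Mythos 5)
Your proof is correct and is essentially the paper's own argument: the paper likewise reduces the affine relations to the single equation $\calC_+^{(i)}\star\mathds{1}_n\langle\chi_{i+1},0\rangle\star\calC_+^{(i)}=\mathds{1}_n\langle\chi_{i},0\rangle$, proves the $n=2$ case by transporting character twists through $\star$ (phrased there via centrality of $\mathds{1}_2\langle\chi_1+\chi_2,0\rangle$ and the homotopy $\mathds{1}_2\langle\alpha,\beta\rangle\sim\mathds{1}_2\langle\alpha+\gamma,\beta-\gamma\rangle$, which are your ``transport rules'') until $\calC_+\star\calC_-=\mathds{1}_2$ can be invoked, and then applies the insertion functor to pass to general $n$. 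The one place you go beyond the paper is the far commutation $\calC_+^{(i)}\star\mathds{1}_n\langle\chi_n,0\rangle=\mathds{1}_n\langle\chi_n,0\rangle\star\calC_+^{(i)}$ for $i<n-1$: the paper's proof asserts that commutativity of the elements $\mathds{1}_n\langle\chi_i,0\rangle$ plus the JM equation suffices, which indeed disposes of the mixed relation $\sigma_{n-1}\Delta_n\sigma_{n-1}\Delta_n=\Delta_n\sigma_{n-1}\Delta_n\sigma_{n-1}$ but leaves the far commutation tacit, whereas your spectator-block argument (the twist $\langle-\chi_n,\chi_n\rangle$ lands on the unit factor of the induced object, where it is null-homotopic) is precisely the verification needed to make that reduction complete. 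The obstacle you flag---rigorous compatibility of twists with the Chevalley--Eilenberg reduction and with $\overline{\ind}_k$---is real but is exactly the content the paper imports from \cite{OR16}, so your proposal stands.
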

\begin{proof}
  Since the elements \(\mathds{1}_n\langle\chi_i,0\rangle\) mutually commute, it is enough to check the equation
  \begin{equation}\label{eq:JM-eq}
    \calC_+^{(i)}\star\mathds{1}_n\langle\chi_{i+1},0\rangle
    \star \calC_+^{(i)}=\mathds{1}_n\langle\chi_{i},0\rangle.
    \end{equation}

  Let us first discuss the case \(n=2\). In this case the only relation that we need to show is
  \[\calC_+\star \mathds{1}_2\langle\chi_2,0\rangle\star \calC_+=\mathds{1}_2\langle\chi_1,0\rangle.\]
  This relation follows from the previous theorem. Denote \(\zeta=\chi_1+\chi_2\), then
  \begin{multline}
    \label{eq:2}
    \calC_+\star\mathds{1}_2\langle\chi_2,0\rangle\star\calC_+=\calC_+\star\mathds{1}_2\langle -\chi_1+\zeta,0\rangle\star\calC_+=
    \calC_+\star\mathds{1}_2\langle-\chi_1,0\rangle\star\calC_+ \star \mathds{1}_{2}\langle\chi_2,0\rangle\\
    \star\mathds{1}_2\langle\zeta-\chi_2,0\rangle=
    \calC_+\star\mathds{1}_2\langle 0,-\chi_1\rangle\star\calC_+\star \mathds{1}_{2}\langle\chi_2,0\rangle\star\mathds{1}_2\langle\chi_1,0\rangle=
   \calC_+\star\calC_-\star\mathds{1}_2\langle\chi_1,0\rangle=\mathds{1}_2\langle\chi_1,0\rangle.
 \end{multline}

 The case of general \(n\) follows from the case \(n=2\) because of our inductive definition of the braid group generators. Indeed, applying the functor
 \(\Ind_{i+1,i}\) to the equation (\ref{eq:2}) we get the required equation (\ref{eq:JM-eq}).

\end{proof}

\subsection{Stabilization morphism}
\label{sec:stab-morph}
To complete our proof of the theorem~\ref{thm:forget} we need to prove the following
\begin{prop}We have the following formulas for the action of the forgetful functor:
  \[\forg^*:\mathds{1}_n\langle\chi_n,0\rangle\mapsto \mathds{1}_n,\quad \bar{\mathds{1}}_n\langle\chi_n,0\rangle\mapsto \bar{\mathds{1}}_n.\]
\end{prop}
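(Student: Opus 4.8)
The plan is to show that $\forg^*$ kills the twist by $\langle\chi_n,0\rangle$, i.e.\ that it trivializes the pulled-back twisting line bundle $\CC_{\chi_n}$ on the framed space. Since $\forg^*$ is a pull-back it commutes with tensoring by an equivariant line bundle, so $\forg^*(\mathds{1}_n\langle\chi_n,0\rangle)=\forg^*(\mathds{1}_n)\otimes\forg^*\CC_{\chi_n}$, and likewise for $\bar{\mathds{1}}_n$; here $\forg^*(\mathds{1}_n)$ is the unit of the framed convolution algebra, which is exactly the object written $\mathds{1}_n$ on the right-hand side. Thus everything reduces to producing a $B^2$-equivariant trivialization of $\forg^*\CC_{\chi_n}$, and for that it is enough to exhibit an everywhere-nonzero regular function on the framed space whose $B^2$-weight is $\langle\chi_n,0\rangle$, multiplication by which gives the trivialization.

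For the reduced statement I would read the trivializing function straight off the stability condition (\ref{eq:stab}). Every point $(X,g,Y,v)\in\calXr_{2,fr}$ satisfies $g^{-1}(v)\in V^0$, so
\[
w_n:=\bigl(g^{-1}(v)\bigr)_n
\]
is nowhere zero on $\calXr_{2,fr}$. With the $B^2$-action $(b_1,b_2)\cdot(X,g,Y,v)=(\Ad_{b_1}(X),b_1gb_2^{-1},\Ad_{b_2}(Y),b_1v)$ one computes $g^{-1}(v)\mapsto b_2\,g^{-1}(v)$, so $w_n\mapsto\chi_n(b_2)\,w_n$; that is, $w_n$ has weight $\langle 0,\chi_n\rangle$. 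Invoking the homotopy $\mathds{1}_n\langle\alpha,\beta\rangle\sim\mathds{1}_n\langle\alpha+\gamma,\beta-\gamma\rangle$ of the preceding subsection (and its analogue for $\bar{\mathds{1}}_n$) to trade $\langle\chi_n,0\rangle$ for $\langle 0,\chi_n\rangle$, multiplication by $w_n$ then realizes $\forg^*(\bar{\mathds{1}}_n\langle 0,\chi_n\rangle)\cong\forg^*(\bar{\mathds{1}}_n)=\bar{\mathds{1}}_n$. The non-reduced case is parallel: the stability condition on $\calX_{2,fr}$ forces $\bigl(g_1^{-1}(u)\bigr)_n\neq 0$, and since $B^2$ acts by $g_1\mapsto g_1b_1^{-1}$ while fixing $u$, this function already has weight $\langle\chi_n,0\rangle$ and trivializes the twist without recourse to the homotopy; alternatively the non-reduced identity follows from the reduced one through the Knörrer embedding $\Phi$, which is fully faithful, commutes with $\forg^*$ and with twists, and satisfies $\mathds{1}_n=\Phi(\bar{\mathds{1}}_n)$.

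The delicate point, and where I expect the real work to sit, is that this trivialization has to be an isomorphism in the equivariant category $\MF_{B^2}$ rather than merely of the underlying line bundle. Here I would use the explicit shape of the two units: each of $\mathds{1}_n,\bar{\mathds{1}}_n$ is a push-forward $i_*$ of the structure sheaf of the $B$-locus, with vanishing differential $D$ and vanishing correction term $\partial$. Because $w_n$ (resp.\ $(g_1^{-1}u)_n$) is an honest regular function it is central, hence commutes with $D_{tot}=D+d_{ce}+\partial$; being moreover a $T$-weight vector of the stated weight, multiplication by it intertwines the two $B^2$-equivariant structures up to precisely the character recorded by the twist, and is therefore an isomorphism in $\MF_{B^2}$. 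The remaining ingredients are formal: $\forg^*$ commutes with tensoring by $\CC_{\chi_n}$ and, by flat base change along the open framing-forgetting projection, with the push-forward $i_*$ defining the units; and the homotopy of the preceding subsection is preserved by the functor $\forg^*$. Assembling these gives the two displayed formulas.
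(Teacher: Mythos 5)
Your proposal is correct and follows essentially the same route as the paper: the whole content is that the stability condition makes $S=(g_1^{-1}(v))_n$ an invertible regular function of weight $\langle\chi_n,0\rangle$ on $\calX_{2,fr}$, and multiplication by it is the desired equivariant isomorphism. Your treatment of the reduced case is in fact slightly more careful than the paper's ``the second one is analogous'': you correctly notice that on $\calXr_{2,fr}$ the invertible function $(g^{-1}(v))_n$ has weight $\langle 0,\chi_n\rangle$ rather than $\langle\chi_n,0\rangle$, so one must first invoke the homotopy $\bar{\mathds{1}}_n\langle\chi_n,0\rangle\sim\bar{\mathds{1}}_n\langle 0,\chi_n\rangle$ (whose reduced-space version, though unstated in the paper, follows by the same kind of argument, e.g.\ multiplying by the invertible function $\chi_n(g)$ on the support $\frb\times B\times\frn$ of the unit).
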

\begin{proof}
  Let us show the first equation since the second one is analogous. Indeed,  the space \(\calX_{2,fr}\) has coordinates \((X,g_1,Y_1,g_2,Y_2,v)\) and the stability
  condition implies that \(g_1^{-1}(v)\) is the vector that has non-zero last component. Hence, the function \(S=(g_1^{-1}(v))_n\) is an invertible function on
  \(\calX_{2,fr}\) and the multiplication by \(S\) yields a invertible homomorphism of the matrix factorizations on \(\calX_{2,fr}\) that identifies
  \(\forg^*(\mathds{1}_n\langle\chi_n,0\rangle)=\mathds{1}\langle\chi_n,0\rangle\) with \(\mathds{1}_n\).
\end{proof}


 \bibliographystyle{unsrt}
 \bibliography{AffBraids}

\begin{thebibliography}{10}

\bibitem{OR16}
Alexei Oblomkov and Lev Rozansky.
\newblock Knot homology amd sheaves on the {Hilbert} scheme of points on the
  plane.

\bibitem{AS12}
Mina Aganagic and Shamil Shakirov.
\newblock Refined {Chern}-{Simons} theory and knot homology.
\newblock {\em Proceedings of Symposia in Pure Mathematics}, 85:3--31, 2012.

\bibitem{ORS12}
Alexei Oblomkov, Jacob Rasmussen, and Vivek Shende.
\newblock The {Hilbert} scheme of a plane curve singularity and the {HOMFLY}
  homology of its link.

\bibitem{GORS12}
Eugene Gorsky, Alexei Oblomkov, Jacob Rasmussen, and Vivek Shende.
\newblock Torus knots and the rational {DAHA}.
\newblock {\em Duke Mathematical Journal}, 163:2709--2794, 2014.

\bibitem{GN15}
Eugene Gorsky and Andrei Negu{\c t}.
\newblock Refined knot invariants and {Hilbert} schemes.
\newblock {\em J. Math. Pures Appl.}, 9:403--435, 2015.

\bibitem{GNR16}
Eugene Gorsky, Jacob Rasmussen, and Andrei Negu{\c t}.
\newblock Flag {Hilbert} schemes, colored projectors and {Khovanov}-{Rozansky}
  homology.

\bibitem{KhR08a}
Mikhail Khovanov and Lev Rozansky.
\newblock Matrix factorizations and link homology.
\newblock {\em Fundamenta Mathematicae}, 199:1--91, 2008.

\bibitem{KhR08b}
Mikhail Khovanov and Lev Rozansky.
\newblock Matrix factorizations and link homology {II}.
\newblock {\em Geometry and Topology}, 12:1387--1425, 2008.

\bibitem{HMR07}
Tom Halverson, Manuela Mazzocco, and Arun Ram.
\newblock Commuting families in {Temperley}-{Lieb} algebras.
\newblock {\em Nagoya Mathematical Journal}, 2009.

\bibitem{AK15}
Sergey Arkhipov and Tina Kanstrup.
\newblock Braid group actions on matrix factorizations.

\bibitem{R08}
Simon Riche.
\newblock Geometric braid group action on derived categories of coherent
  sheaves (with a joint appendix with {Roman Bezrukavnikov}).
\newblock {\em Representation theory}, 12:131--169, 2008.

\bibitem{E80}
Eisenbud David.
\newblock Homological algebra on a complete intersection, with an application
  to group representations.
\newblock {\em Trans. Amer. Math. Soc.}, (1):35--64, 1980.

\bibitem{O04}
Dmitri Orlov.
\newblock Triangulated categories of singularities and {D}-branes in
  {Landau}-{Ginzburg} models.
\newblock {\em Proc. Steklov Inst. Math.}

\bibitem{D11}
Tobias Dyckerhoff.
\newblock Compact generators in categories of matrix factorizations.
\newblock {\em Duke Math. J.}, (2):223--274, 2011.

\bibitem{Kn}
Horst Kn\"orrer.
\newblock Cohen-{Macaulay} modules on hypersurface singularities. {I}.
\newblock {\em Inventiones Mathematecae}, (1):153--164, 1987.

\end{thebibliography}
\end{document}